\newlength{\ten}
\newcommand{\de}{\partial}
\newcommand{\db}{\overline{\partial}}
\newcommand{\im}{\sqrt{-1}}
\newcommand{\ddbar}{\im\de\db}
\newcommand{\ov}[1]{\overline{#1}}
\newcommand{\Ric}{\mathrm{Ric}}
\newcommand{\PP}{\mathbb P}
\newcommand{\newsort}[1]{}
\numberwithin{equation}{section}
\newtheorem{thm}{Theorem}[section]
\newtheorem{prop}[thm]{Proposition}
\newtheorem{lem}[thm]{Lemma}
\newtheorem{cor}[thm]{Corollary}
\newtheorem{conj}[thm]{Conjecture}
\theoremstyle{definition}
\newtheorem{defn}[thm]{Definition}
\theoremstyle{definition}
\newtheorem{remark}[thm]{Remark}
\begin{document}
\title{Metric Contraction of the Cone Divisor by the Conical K\"ahler-Ricci Flow}
\author{Gregory Edwards}

\begin{abstract}
We use the momentum construction of Calabi to study the conical K\"ahler-Ricci flow on Hirzebruch surfaces with cone angle along the exceptional curve, and show that either the flow Gromov-Hausdorff converges to the Riemann sphere or a single point in finite time, or the flow contracts the cone divisor to a single point and Gromov-Hausdorff converges to a two dimensional projective orbifold. This gives the first example of the conical K\"ahler-Ricci flow contracting the cone divisor to a single point. At the end, we introduce a conjectural picture of the geometry of finite time non-collapsing singularities of the flow on K\"ahler surfaces in general.
\end{abstract}

\today
\maketitle

\section{Introduction}

K\"ahler-Einstein equations with cone singularities along a simple normal crossing divisor have been of contemporary interest to K\"ahler geometers. While such metrics have been around for some time \cite{Do11,Ti96,troy,yau}, they have recently seen a great deal of study and have found important applications to smooth K\"ahler-Einstein metrics \cite{brendle,CGP,CDS1,CDS2,CDS3,DaSo15,GP,JMR,LiSun,SoWa12,Ti}. See also \cite{Ru14} for a general survey of the existing literature.

It is an interesting question to explore the use of parabolic techniques in the conical setting, and one is naturally led to consider the conical K\"ahler-Ricci flow. This is a parabolic flow of conical K\"ahler metrics which deforms the smooth part of the metric by the Ricci tensor, and keeps the conical boundary conditions fixed. In this way, the conical K\"ahler-Ricci flow is a natural generalization of the K\"ahler-Ricci flow to conic metrics. Ideally, one would hope to smoothly deform an initial conical K\"ahler metric to a conical K\"ahler-Einstein metric, if one exists.

The conical K\"ahler-Ricci flow was first studied in the context of Riemann surfaces \cite{Yin10,Yin13}, and there have since been a number of results obtained in that setting \cite{MRS,PSSWa14, PSSWa15}. In higher dimensions the short time existence was shown by Chen-Wang using Bessel functions \cite{ChenWang1} (see also Guo-Song \cite{GuoSo16} for a recent proof using maximum principle arguments), and the long time existence was proved by Shen \cite{Shen,Shen14}.

When studying Hamilton's Ricci flow on Riemannian manifolds \cite{H82}, one of the key features is that while the flow always smoothes the metric for short times, for longer times the non-linearities of the flow can cause the metric to become singular in finite time. In real dimension three, Perelman famously used the technique of performing surgeries to continue the flow past the singularity time \cite{Per3,Per2}.

On a K\"ahler manifold, the Ricci flow is known as the K\"ahler-Ricci flow and was first studied by Cao \cite{cao} using parabolic versions of Yau's estimates \cite{yau}. In this setting, the study of the singularities of the flow becomes much more tractable. Song-Tian \cite{SoTi09} conjectured that on a K\"ahler manifold the only surgeries needed to continue the flow past singularities are algebraic surgeries coming from the minimal model program (MMP) which are bimeromorphic with respect to the underlying complex structure. They also conjectured that the K\"ahler-Ricci flow would carry out an analytic version of the MMP by deforming the initial metric until the flow possibly reaches a finite time singularity. If the volume of the metric tends to zero at the singularity time, the manifold should have the structure of either a Mori fiber space or a Fano manifold; otherwise the volume remains away from zero and one hopes to continue the flow on a new manifold after a bimeromorphic transformation. The bimeromorphic transformation is expected to be precisely that prescribed by \textit{the MMP with scaling of the K\"ahler class} \cite{BiCaHaMc06}, and if such a transformation exists, the flow can be continued in a weak sense on the resulting manifold \cite{SoTi09}. Such bimeromorphic transformations can be made to continue the flow at each successive singularity until either the volume goes to zero in finite time, or the flow reaches a manifold on which it exists for all time. In the latter case the final manifold is a minimal model in the sense of MMP \cite{TiZh,Ts}, and the normalized flow is expected to converge to a unique singular twisted K\"ahler-Einstein metric on its canonical model, possibly of lower dimension. That the K\"ahler-Ricci flow always performs a canonical surgical contraction at finite time non-collapsing singularities is known in dimension two \cite{SoWe11,SoWe11a}, and the Gromov-Hausdorff convergence to a singular K\"ahler-Einstein metric on the canonical model is known in dimensions two and three for minimal models of general type \cite{GuSoWe16,TiZh16}. It is known in general that the singularities of the K\"ahler-Ricci flow always form along an analytic subvariety \cite{CoTo15}.

While the conical K\"ahler-Ricci flow has been studied for many different geometric situations \cite{ChenWang2,E,LiZh,Nom16,Shen,YZh16}, little is known about the geometry of finite time singularities of the flow in general. It is reasonable to expect that singularities of the conical K\"ahler-Ricci flow may behave similarly to those of the K\"ahler-Ricci flow, however the analysis of singularities along the conical K\"ahler-Ricci flow is complicated by the presence of the cone divisor. 

The purpose of this paper is to examine solutions to the conical K\"ahler-Ricci flow on Hirzebruch surfaces with symmetry and to study the geometry of the singularities that occur. In particular, we show that for some initial conical metrics the flow will contract the cone divisor itself to a single point at the singularity time. That the conical K\"ahler-Ricci flow can contract the cone divisor is of interest in its own right, but these finite time singularities also show differences from those of the K\"ahler-Ricci flow on surfaces. First, the variety obtained by the contraction may not be a smooth manifold, but may be only an orbifold in general. Secondly, this demonstrates that the conical K\"ahler-Ricci flow can contract embedded curves of higher negative self-intersection. This is in contrast to the K\"ahler-Ricci flow on surfaces, where the only finite time non-collapsing singularities are contractions of curves of self-intersection $(-1)$ \cite{SoWe11,SoWe11a} and the space obtained after the contraction is always smooth \cite{GH78}. At the end, we state some further conjectures concerning the geometry of more general finite time non-collapsing singularities on K\"ahler surfaces and when we expect them to occur.

\begin{defn}
Let $X$ be a compact $n$-dimensional K\"ahler manifold, and $D$ a smooth irreducible divisor (that is, a compact irreducible codimension one complex submanifold). We say $\omega^*$ is a \textit{conical K\"ahler metric} with \textit{cone angle} $2 \pi \alpha$ ($0<\alpha<1$) along $D$ if $\omega^*$ is a K\"ahler metric on $X \setminus D$, and if for all $p \in D$, $\omega^*$ is quasi-isometric to the model cone metric
\[
	\im \big( \frac {dz^1 \wedge d \ov z^1}{|z^1|^{2(1-\alpha)}} + \sum_{j=2}^n{dz^j \wedge d \ov z^j} \big)
\]
in coordinates $(z^1,...,z^n)$ centered at $p$ such that $D = \{z^1 = 0\}$. We call $\beta = (1-\alpha)$ the \textit{weight} along $D$ and say $\omega^*$ is a conical K\"ahler metric on $(X, \beta D)$. We also write $\mathcal O_X(D)$ for the holomorphic line bundle associated to $D$.

We say $\omega(t)$ is a solution to the conical K\"ahler-Ricci flow starting with $\omega^*$ a conical K\"ahler metric on $(X,\beta D)$ if it satisfies
\begin{equation}\label{eq:1}
	\begin{cases}
		\frac \de {\de t} \omega(t) = - \Ric(\omega(t)) + 2 \pi (1 - \alpha) [D] \\
		\omega(0) = \omega^*
	\end{cases}
\end{equation}
in the sense of currents. Here $[D]$ is the current of integration along $D$, and
\[
	\Ric(\omega(t)) = - \ddbar \log(\omega(t)^n)
\]
is well-defined globally in the sense of currents, and matches the Ricci tensor where $\omega(t)$ is smooth.
\end{defn}

We now state the main theorem of this paper.
\begin{thm}\label{main}
Let $X$ be a Hirzebruch surface of degree $k \geq 1$ with $D_0$ the exceptional curve, and $\omega_0$ a K\"ahler metric on $X$ in the class $2 \pi (b [D_\infty] - a [D_0])$ with $0 < a < b$ satisfying the Calabi ansatz (see Section 2 for relevant definitions), let $\sigma$ be the unique non-zero holomorphic section of $\mathcal O_X(D_0)$ up to scaling, and let $\eta$ be any Calabi invariant Hermitian metric on $\mathcal O_X(D_0)$. 

Then $\omega^* = \omega_0 + \delta \ddbar |\sigma|_\eta^{2 \alpha}$ $(0<\alpha<1)$ is a Calabi invariant conical K\"ahler metric on $X$ with cone angle $2 \pi \alpha$ along $D_0$ for all $\delta>0$ sufficiently small. There are three possible behaviors for the solution to the conical K\"ahler-Ricci flow ~\eqref{eq:1} starting with $\omega^*$:
\begin{enumerate}[(i)]
  \item If
    \begin{equation}\label{eq}
	    0 < \alpha < \mathrm{min}\big(\frac 2 k - (1 + \frac 2 k)\frac a b,1\big),
    \end{equation}
    then $\omega(t)$ contracts the cone divisor $D_0$ to a point at the singularity time $T = \frac {ak} {2 - \alpha k}$.

    Moreover, $\omega(t)$ converges to $\omega_T$ a non-negative current which is a smooth K\"ahler metric on $X \setminus D_0$, and if $(\ov X, d)$ is the metric completion of $(X \setminus D_0, \omega_T)$, then $\ov X$ is homeomorphic to the projective orbifold $\PP^2/\mathbb Z_k$, and $(X,\omega(t))$ converges to $(\ov X,d)$ in the Gromov-Hausdorff sense as $t \rightarrow T^-$.
  \item If
    \begin{equation}\label{eq'}
      \mathrm{max}\big( \frac 2 k - (1 + \frac 2 k)\frac a b, 0 \big) < \alpha < 1,
    \end{equation}
    then $\mathrm{Vol}(X,\omega(t)) \to 0$ as $t \to T^-$ where $T = \frac {b - a}{1 + \alpha}$, and  $(X,\omega(t))$ converges in Gromov-Hausdorff topology to $(\PP^1,\lambda \omega_{\mathrm{fs}})$ where $\lambda = \frac 1 {(1+\alpha)} \big( (k + 2) a + (\alpha k - 2)b \big)$ and $\omega_{\mathrm{fs}}$ is the Fubini-Study metric on $\PP^1$.
  \item If
    \begin{equation}\label{eq''}
      0 < \alpha = \frac 2 k - (1 + \frac 2 k)\frac a b < 1
    \end{equation}
    then $\omega(t)$ exists for all $t < T = \frac {2k}{2 - \alpha k}$ and $(X,\omega(t))$ converges in Gromov-Hausdorff topology to a single point.
\end{enumerate}
\end{thm}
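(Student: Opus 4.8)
The plan is to use the momentum (Calabi) construction of Section 2 to reduce \eqref{eq:1} to a one-dimensional parabolic problem, and to extract the degenerating geometry of each case from the behavior of the momentum profile. First I record that on $X = \mathbb F_k$ adjunction gives $-K_X = \frac{k+2}{k}[D_\infty] + \frac{k-2}{k}[D_0]$ (from $D_0^2 = -k$, $D_\infty^2 = k$, $D_0\cdot D_\infty = 0$), so that \eqref{eq:1} forces
\[
  [\omega(t)] = 2\pi\big(b(t)[D_\infty] - a(t)[D_0]\big), \qquad a(t) = a - \tfrac{2-\alpha k}{k}\,t, \quad b(t) = b - \tfrac{k+2}{k}\,t,
\]
while a class $p[D_\infty] - q[D_0]$ on $\mathbb F_k$ is K\"ahler exactly when $0 < q < p$. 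Hence a positive-current solution can exist only up to $T = \min\!\big(\frac{ak}{2-\alpha k},\frac{b-a}{1+\alpha}\big)$, and a direct comparison of these two times shows that the ranges \eqref{eq}, \eqref{eq'}, \eqref{eq''} partition the interval $0<\alpha<1$ and correspond precisely to $a(T) = 0 < b(T)$, to $0 < a(T) = b(T)$, and to $a(T) = b(T) = 0$. That $\omega^* = \omega_0 + \delta\ddbar|\sigma|_\eta^{2\alpha}$ is a Calabi-invariant conical K\"ahler metric on $(X,(1-\alpha)D_0)$ for all small $\delta$ is a local computation: near $D_0$, $|\sigma|_\eta^{2\alpha}$ is comparable to $|z^1|^{2\alpha}$ and $\ddbar|z^1|^{2\alpha}$ supplies the model cone term, while Calabi invariance is inherited from $\omega_0,\sigma,\eta$.

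Next I would carry out the momentum reduction. In Calabi-symmetric form $\omega(t)$ is determined by a momentum profile $\vp(\cdot,t)\ge 0$ on an interval $I(t) = [\tau_-(t),\tau_+(t)]$ whose endpoints are affine in $t$ (governed by the previous paragraph), with $\vp>0$ on the interior, $\vp = 0$ at the endpoints, $\vp'(\tau_+)$ encoding smoothness along $D_\infty$, and $\vp'(\tau_-)$ encoding the cone angle $2\pi\alpha$ along $D_0$; under this ansatz \eqref{eq:1} becomes a one-dimensional parabolic equation for $\vp$ with these (time-dependent) boundary conditions. Short-time existence and smoothness on $X\setminus D_0$ for $t<T$ follow as in \cite{ChenWang1,Shen}. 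I would then establish uniform $C^0$ and derivative estimates for $\vp$ on $I(t)$ up to $t = T$, using the maximum principle and barrier functions adapted to the conical endpoint together with Cao's interior parabolic estimates away from $D_0$. From $\vp$ one reads off $\mathrm{Area}(D_0,\omega(t)) = 2\pi k\,a(t)$, $\mathrm{Area}(F,\omega(t)) = 2\pi(b(t)-a(t))$, and the fiber and base lengths as integrals of $\vp^{-1/2}$, and hence a uniform diameter bound in each of the three cases.

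Finally I would identify the limits. In case \eqref{eq}, $a(t)\to 0$, so $\mathrm{Area}(D_0,\omega(t))\to 0$ and the profile estimates show $\vp$ converges with the $\tau_-$-end of $I(t)$ closing up; thus $D_0$ collapses to a point while $\omega(t)\to\omega_T$ smoothly on $X\setminus D_0$, with $2\pi b(T)[D_\infty]$ the pullback, under the contraction $\pi\colon X\to\PP^2/\mathbb Z_k$ of $D_0$ (the cone over the degree-$k$ rational normal curve, with a single $\tfrac1k(1,1)$ singularity), of an ample class. Since $\mathrm{diam}(D_0,\omega(t))\to 0$, the metric completion $(\ov X,d)$ of $(X\setminus D_0,\omega_T)$ adds exactly one point and is homeomorphic to $\PP^2/\mathbb Z_k$ via $\pi$, and the smooth convergence on $X\setminus D_0$ plus the diameter bound promotes this to Gromov-Hausdorff convergence $(X,\omega(t))\to(\ov X,d)$. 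In case \eqref{eq'}, $b(t)-a(t)\to 0$, so $\mathrm{Area}(F,\omega(t))\to 0$ and $\mathrm{Vol}(X,\omega(t))\to 0$, the fibers collapse, and the metric on the base converges; since $[\omega(t)]\cdot[D_\infty] = 2\pi k\,b(t)\to 2\pi\lambda$ with $\lambda = k\,b(T) = \frac1{1+\alpha}\big((k+2)a+(\alpha k-2)b\big)$, the base limit is $(\PP^1,\lambda\omega_{\mathrm{fs}})$, and the diameter bound together with the collapsing of the fibers gives the stated Gromov-Hausdorff convergence. In case \eqref{eq''}, both $a(t)$ and $b(t)-a(t)$ tend to $0$, so $\mathrm{Vol}(X,\omega(t)) = (2\pi)^2 k\big(b(t)^2-a(t)^2\big)\to 0$ and the diameter bound of the previous paragraph degenerates to $0$, whence $(X,\omega(t))$ converges to a point.

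The main obstacle is the uniform estimate for $\vp$ at the conical endpoint $\tau_-$ up to $t = T$: there the interior parabolic theory does not apply and the cone angle must be propagated correctly by the chosen barriers. In case \eqref{eq} this is compounded by the need to control the rate at which the $\tau_-$-end of $I(t)$ closes up, both in order to obtain genuine (rather than merely pointed or local) Gromov-Hausdorff convergence and to pin down the metric completion as the orbifold $\PP^2/\mathbb Z_k$ and not a more degenerate space.
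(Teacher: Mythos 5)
Your cohomological bookkeeping is correct and agrees with the paper: the class evolves with $a_t = a - (\tfrac 2k - \alpha)t$ and $b_t = b - (1+\tfrac 2k)t$, the three ranges of $\alpha$ are exactly the trichotomy of which of $a_T$, $b_T - a_T$ vanishes first, and your formulas for $T$ and $\lambda = k a_T = k b_T$ check out. The qualitative picture (contract $D_0$, collapse the fibers, or shrink to a point) is also the right one, and the momentum-profile language is an equivalent reformulation of the paper's potential $v(\rho)$. But there is a genuine gap precisely at the step you yourself flag as ``the main obstacle'': the uniform a priori estimates on the profile up to $t=T$ are only asserted (``maximum principle and barrier functions adapted to the conical endpoint''), and everything separating the theorem from its cohomological shadow depends on their precise quantitative form. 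In case (\textit{i}), knowing $\mathrm{Area}(D_0,\omega(t)) = 2\pi k a_t \to 0$ does not give $\mathrm{diam}(D_0,\omega(t))\to 0$, let alone uniform shrinking of a whole neighborhood of $D_0$, which is what is needed both to identify the metric completion and to upgrade local smooth convergence to global Gromov--Hausdorff convergence. The paper gets this from the specific decay rates $v_\epsilon' - a_t \leq Ce^{\alpha\rho/2}$ and $v_\epsilon'' \leq C(v_\epsilon'-a_t)(b_t-v_\epsilon') \leq Ce^{\alpha\rho/2}$ (Lemmas \ref{lemma1} and \ref{lemma2}), which yield $\omega_T \leq C e^{\alpha\rho/2}(k\ddbar\rho + \im\de\rho\wedge\db\rho)$; pulled back under the $k$-fold cover $\Phi:\mathbb C^2\setminus\{0\}\to X\setminus(D_0\cup D_\infty)$ this is $C r^{-(2-\alpha k)}\omega_{\mathrm{eucl}}$ with $0<2-\alpha k<2$ by \eqref{eq}, giving the diameter bound \eqref{eq:11} and hence the homeomorphism of $\ov X$ with $\PP^2/\mathbb Z_k$. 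Similarly, in case (\textit{ii}) the uniform collapse of the fibers rests on $v_\epsilon'' \leq C(T-t)$, which the paper can only extract after the third-order bound $|v_\epsilon'''|\leq C v_\epsilon''$, itself a nontrivial maximum principle argument complicated by the conical twisting terms. None of these inputs is produced, or even named, in your plan.

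A second, related omission is the mechanism that legitimizes the maximum principle in the conical setting. The paper never argues ``at the conical endpoint'': it runs every maximum principle on the smooth twisted approximations $\omega_\epsilon(t)$ of Campana--Guenancia--P\u{a}un and Shen, for which the test quantities ($H_\epsilon$ in Lemma \ref{lemma2}, $Q$ in Lemma \ref{lem3}) are smooth on all of $\mathbb R\times[0,T']$ and can be checked to be bounded as $\rho\to\pm\infty$ for each fixed $\epsilon$ using Lemma \ref{calabi}; all constants are arranged to be independent of $\epsilon$ before passing to the limit, with Shen's volume bound (Proposition \ref{CKRF1}) supplying the starting first-order estimate. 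Working directly with the conical profile and barriers at $\tau_-$, as you propose, is not obviously wrong, but it is a different and unsubstantiated route: you would need to justify the boundary analysis at the cone edge from scratch and, more importantly, your barriers would still have to produce the exponents $e^{\alpha\rho/2}$ and the rate $T-t$ above, since these are exactly what the geometric conclusions consume.
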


Part (\textit{i}) provides the first explicit example of the conical K\"ahler-Ricci flow contracting the cone divisor at the singularity time, and since $D_0$ has self-intersection $(-k)$, it is the first example of the flow contracting curves of arbitrarily high negative self-intersection on surfaces, in contrast to singularities of the K\"ahler-Ricci flow on surfaces.

We refer to condition ~\eqref{eq} as the \textit{contracting case}, and condition ~\eqref{eq'} as the \textit{collapsing case}. Condition ~\eqref{eq''} corresponds to the log Fano case and is due to Liu-Zhang \cite{LiZh} (cf. Remark \ref{fano}). 

\begin{remark}
In \cite{SoWe11b}, Song-Weinkove study the K\"ahler-Ricci flow with Calabi symmetry on a Hirzebruch surface of degree $k$ starting with an initial K\"ahler metric in the class $2\pi(b [D_\infty] - a [D_0])$ with $0 < a < b$, and prove that if $k=1$ and $3a<b$, then the K\"ahler-Ricci flow contracts the exceptional curve to a single point at the singularity time and the flow converges to a compact metric space homeomorphic to $\PP^2$. Otherwise, the volume of $X$ converges to zero at the singularity time and the flow converges to either a single point, if $k=1$ and $3a=b$; or in all other cases, in particular whenever $k \geq 2$, the flow converges to $(\PP^1,\lambda \omega_{\mathrm{fs}})$ for some explicit constant $\lambda>0$ depending only on the initial K\"ahler class. This behavior of the K\"ahler-Ricci flow had been conjectured earlier by Feldman-Ilmanen-Knopf \cite{FIK}.

If we set $\alpha = 1$ in Theorem ~\ref{main}, so the conical K\"ahler-Ricci flow reduces to the smooth K\"ahler-Ricci flow, we recover the conditions for contracting the exceptional divisor in the smooth setting as in \cite{SoWe11b}.
\end{remark}

\begin{remark}\label{fano}
If $\alpha = \frac 2 k - (1 + \frac 2 k) \frac a b$ then 
\[
	[K^{-1}_X] -(1-\alpha)[D_0] 
\]
is a K\"ahler class (cf. equation ~\eqref{eq:12}) and the initial K\"ahler class is a positive multiple of it. Thus the conical K\"ahler-Ricci flow will converge in Gromov-Hausdorff topology to a single point at the singularity time as proved by Liu-Zhang \cite{LiZh}. 
\end{remark}

\begin{remark}
  Although Theorem \ref{main} is stated for Hirzebruch surfaces, the proof can be generalized to arbitrary projective line bundles of positive degree over $\PP^{n-1}$ for $n \geq 2$ with few changes. Since the phenomena are already apparent in dimension two, we give the proof for Hirzebruch surfaces and leave the statement of the theorem in higher dimensions up to the reader.
\end{remark}

The organization of the rest of the paper is as follows. In Section 2 we define Hirzebruch surfaces and the Calabi ansatz. In Section 3 we review relevant estimates along the conical K\"ahler-Ricci flow without symmetry. In Section 4 we use the Calabi ansatz to reduce the conical K\"ahler-Ricci flow with symmetry to a scalar parabolic equation. In Section 5 we prove estimates along the flow with symmetry in the contracting case and prove Theorem \ref{main}.\textit{i}. In Section 6 we prove estimates for the flow with symmetry in the collapsing case and prove Theorem \ref{main}.\textit{ii}. In Section 7 we state some further conjectures for the behavior of finite time non-collapsing singularities of the flow on K\"ahler surfaces in general, and in Section 8 we outline an example where we expect the Gromov-Hausdorff limit along the conical K\"ahler-Ricci flow to be a metric with cone singularities along a divisor without simple normal crossing support.

\section{Hirzebruch surfaces and Calabi invariant K\"ahler metrics}

Consider the natural action of $U(2)$ on $\PP^2$ fixing a single point which we can take to be $[1:0:0]$. The subgroup $\mathbb Z_k \subseteq U(2)$ acts on $\PP^2$ by
\[
  [x_0:x_1:x_2] \mapsto [x_0:\zeta_k x_1 : \zeta_k x_2]
\]
where $\zeta_k$ is a $k^{\mathrm{th}}$-root of unity. The $\mathbb Z_k$-action has an isolated fixed point at $[1:0:0]$, and $\PP^2/\mathbb Z_k$ is smooth away from a single orbifold singularity. The blow-up centered at this point is smooth and the blow-up map,
\begin{equation}\label{eq:14}
	\pi: X \rightarrow \PP^2/\mathbb Z_k,
\end{equation}
is a biholomorphism on $X \setminus D_0$, where $D_0$ is the exceptional curve.

The $U(2)/\mathbb Z_k$-action on $\PP^2/\mathbb Z_k$ naturally lifts to $X$, and forms a maximal compact subgroup of its automorphism group \cite{Ca82}. A K\"ahler metric is said to satisfy the \textit{Calabi ansatz} if it is invariant under the $U(2)/\mathbb Z_k$-action on $X$.

$X$ is then a smooth projective variety, and can be given the structure of a ruled surface over $\PP^1$. That is, there is a map
\begin{equation}\label{eq:15}
 p: X \rightarrow \PP^1
\end{equation}
such that all the fibers are smooth and isomorphic to $\PP^1$. It follows that $X$ is isomorphic to the projectivization of a rank 2 holomorphic vector bundle on $\PP^1$ \cite{GH78}, and
\[
	X \cong \PP( \mathcal O_{\PP^1} \oplus \mathcal O_{\PP^1}(-k) ).
\]
for some $k \geq 1$ (we omit the case where $k=0$ and $X$ is biholomorphic to $\PP^1 \times \PP^1$). The complex subvariety corresponding to the zero section of $\mathcal O_{\PP^1}(-k)$ is identified with the exceptional curve in the blow-up construction, and has self-intersection number $D_0 \cdot D_0 = -k$. Similarly, the subvariety corresponding to the zero section of $\mathcal O_{\PP^1}$, which we call the \textit{infinity section}, $D_\infty$, has self-intersection number $D_\infty \cdot D_\infty = k$.

Moreover, $H^{1,1}(X,\mathbb R)$ is spanned by the Poincar\'e duals of the zero section and the infinity section \cite{GH78}. Thus any class $\xi \in H^{1,1}(X,\mathbb R)$ can be written in the form
\[
	\xi = b [D_\infty] - a [D_0]
\]
for real constants $a,b$. The class admits a K\"ahler metric if and only if $0 < a < b$.

Next, we construct $U(2)/\mathbb Z_k$-invariant metrics on $X$ by working on the total space of the line bundle $\mathcal O_{\PP^1}(-k)$ which is biholomorphic to $X \setminus D_\infty$. The line bundle is covered by two charts $U$ and $U'$ biholomorphic to $\mathbb C^2$. We let $z$ and $w$ (resp. $z'$ and $w'$) be the coordinates on $U$ with $z$ the base coordinate, and $w$ the fiber coordinate (resp. on $U'$ with $z'$ and $w'$ the base and fiber coordinates), so that $D_0 = \{ w = 0 \}$ in this chart. The transition function on the intersection is given by
\[
	\Psi_{UU'}: (U \cap U') \to (U \cap U')
\]
\[
	\Psi_{UU'}(z,w) = (\frac 1 z, w z^k) := (z',w').
\]
Let $h$ be a Hermitian metric on $\mathcal O_{\PP^1}(-k)$ with Chern curvature form
\[
	\mathrm{curv}(h) := -\ddbar \log h =  -k \omega_{\mathrm{fs}}
\]
where $\omega_{\mathrm{fs}}$ is the Fubini-Study metric on $\PP^1$ satisfying
\[
	\begin{cases}
		\Ric(\omega_{\mathrm{fs}}) = 2 \omega_{\mathrm{fs}} \\
		\int_{\PP^1}{\omega_{\mathrm{fs}}} = 2 \pi.
	\end{cases}
\]
We define 
\[
	\rho = \log |w|^2_h
\]
to be our $U(2)/\mathbb Z_k$-invariant coordinate, which is well-defined on $X \setminus (D_0 \cup D_\infty)$. Then for any K\"ahler metric on $X$ satisfying the Calabi ansatz, the local potential on $X \setminus (D_0 \cup D_\infty)$ is of the form
\begin{align*}
	\omega 
	& = \ddbar v(\rho) \\
	& = v'(\rho) \ddbar \rho + \im v''(\rho) \de \rho \wedge \db \rho \\
	& = k v'(\rho) p^*\omega_{\mathrm{fs}} + \im v''(\rho) (\frac {dw} w + \frac {\de h} h) \wedge (\frac {d \ov w} {\ov w} + \frac {\db h} h)
\end{align*}
and any such form is positive definite if and only if $v'(\rho) > 0$ and $v''(\rho)>0$.

\begin{lem}[Calabi \cite{Ca82}]\label{calabi}
If $\omega = \ddbar v(\rho)$ on $X \setminus (D_0 \cup D_\infty)$ with $v'>0$ and $v''>0$, then $\omega$ can be extended to a smooth K\"ahler metric on $X$ satisfying the Calabi ansatz if and only if there exist smooth functions
\[
	v_0, v_\infty : [0,\infty) \rightarrow \mathbb R
\]
with $v_0'(0)>0$, and $v_\infty'(0)>0$ such that
\[
	v_0(e^\rho) = v(\rho) - a \rho \text{, and } v_\infty(e^{-\rho}) = v(\rho) - b \rho
\]
for real constants $0 < a < b$.
\end{lem}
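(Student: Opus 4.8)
The plan is to prove both directions by a local analysis near each of the two distinguished curves $D_0$ and $D_\infty$, since on $X\setminus(D_0\cup D_\infty)$ the computation displayed above already shows that $\omega=\ddbar v(\rho)$ is a smooth positive $(1,1)$-form as soon as $v'>0$ and $v''>0$; the only issue is the limiting behavior as $\rho\to-\infty$ (approaching $D_0=\{w=0\}$) and as $\rho\to+\infty$ (approaching $D_\infty$). The key geometric fact is that $e^{\rho}=|w|_h^2=|w|^2\,h(z)$ is a \emph{smooth} function on the chart $U$ vanishing to exactly first order transversally along $D_0$, and symmetrically that $e^{-\rho}$ extends to a smooth function vanishing to first order along $D_\infty$ once one passes to the fiber coordinate $1/w$ dictated by $\Psi_{UU'}$ and replaces $h$ by $h^{-1}$. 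So expanding $v$ in powers of $e^{\rho}$ near $D_0$ (resp.\ of $e^{-\rho}$ near $D_\infty$) is precisely the bookkeeping needed to recognize smooth tensors in the holomorphic coordinates.

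For the ``if'' direction near $D_0$, I would substitute $v(\rho)=v_0(e^\rho)+a\rho$ and use $\ddbar\rho=k\,p^*\omega_{\mathrm{fs}}$ on $X\setminus(D_0\cup D_\infty)$ to rewrite $\omega=\ddbar\big(v_0(|w|_h^2)\big)+a k\,p^*\omega_{\mathrm{fs}}$; since $|w|_h^2$ is smooth and $v_0$ is smooth on $[0,\infty)$, the right-hand side extends to a smooth closed real $(1,1)$-form on a neighborhood of $D_0$. Evaluating along $D_0$, where $\de(|w|_h^2)=0$ and $\ddbar(|w|_h^2)=\im\,h(z)\,dw\wedge d\ov w$, the matrix of $\omega$ is block diagonal with fiber entry $v_0'(0)\,h(z)>0$ and base entry coming from $a k\,p^*\omega_{\mathrm{fs}}>0$ (using $a>0$), so $\omega$ is positive definite there, hence on a neighborhood of $D_0$. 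The argument at $D_\infty$ is identical with $(v_0,e^\rho,h,a)$ replaced by $(v_\infty,e^{-\rho},h^{-1},b)$. Patching with the manifest positivity on $X\setminus(D_0\cup D_\infty)$ gives a smooth K\"ahler metric on $X$, and it is Calabi invariant because each local expression is built from the invariant quantities $|w|_h^2$ and $p^*\omega_{\mathrm{fs}}$.

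For the ``only if'' direction, assume $\omega$ extends to a smooth Calabi invariant K\"ahler metric on $X$. Working in $U$ at a point where $z=0$ and, after rescaling $h$, $h(0)=1$, so that $\rho=\log|w|^2$ there, one computes the fiber coefficient $g_{w\ov w}=v''(\rho)/|w|^2=v''(\log|w|^2)/|w|^2$. Smoothness of $\omega$ together with invariance under $w\mapsto e^{\im\theta}w$ forces $g_{w\ov w}$ to be a smooth function of $|w|^2$ up to and including $|w|=0$, i.e.\ $v''(\rho)=e^{\rho}\psi(e^\rho)$ for some $\psi$ smooth on $[0,\infty)$. Integrating twice, using $\int_0^s\psi(u)\,du=\psi(0)\,s+O(s^2)$, gives first $v'(\rho)=a+\int_0^{e^\rho}\psi(u)\,du$ for a finite constant $a$, and then $v(\rho)-a\rho=v_0(e^\rho)$ with $v_0$ smooth on $[0,\infty)$ and $v_0'(0)=\psi(0)$. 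The same computation at $D_\infty$ (in the coordinate $1/w$) produces a finite constant $b$ and a smooth $v_\infty$ with $v_\infty(e^{-\rho})=v(\rho)-b\rho$. Finally $v''>0$ gives $a=v'(-\infty)<v'(+\infty)=b$; nondegeneracy of $\omega$ along $D_0$ in the base direction (whose coefficient is $a k\,p^*\omega_{\mathrm{fs}}$) forces $a>0$, and nondegeneracy in the fiber direction forces $v_0'(0)=\psi(0)>0$, with the analogous conclusions at $D_\infty$; hence $0<a<b$ and $v_0,v_\infty$ have the stated properties.

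I expect the main obstacle to be the regularity extraction in the ``only if'' direction: upgrading ``$\omega$ is a smooth tensor, so $v''(\log|w|^2)/|w|^2$ is a smooth rotation-invariant function near $|w|=0$'' to ``$v''(\rho)=e^\rho\psi(e^\rho)$ with $\psi$ smooth including at the endpoint'' requires the standard but slightly delicate fact that a smooth function of $(w,\ov w)$ invariant under $w\mapsto e^{\im\theta}w$ is a smooth function of $|w|^2$, and then checking that the two subsequent integrations preserve smoothness at $0$. Once this is in place the rest is a finite-order bookkeeping exercise, together with care about the dual-trivialization conventions at $D_\infty$.
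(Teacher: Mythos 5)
The paper gives no proof of this lemma at all --- it is quoted directly from Calabi \cite{Ca82} as a black box --- so there is nothing in the text to compare against line by line. Your argument is a correct, essentially self-contained reconstruction of the standard proof: in the ``if'' direction the identity $\omega=\ddbar\bigl(v_0(|w|_h^2)\bigr)+ak\,p^*\omega_{\mathrm{fs}}$ near $D_0$ (and its mirror at $D_\infty$) does exhibit a smooth closed extension, and your evaluation along $D_0$ correctly identifies positivity there with $a>0$ and $v_0'(0)>0$; in the ``only if'' direction the extraction $v''(\rho)=e^\rho\psi(e^\rho)$ from smoothness plus $S^1$-invariance of $g_{w\ov w}$, followed by two integrations (using $\Psi(u)/u$ smooth when $\Psi$ is smooth with $\Psi(0)=0$), is exactly the right mechanism, and $0<a<b$ falls out of strict monotonicity of $v'$ together with nondegeneracy of $\omega|_{D_0}$ and $\omega|_{D_\infty}$. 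You also correctly isolate the one genuinely delicate ingredient, namely that a smooth $S^1$-invariant function of $w$ is a smooth function of $|w|^2$ up to the origin (Whitney/Schwarz); the only cosmetic quibble is the phrase ``vanishing to first order transversally'' for $|w|_h^2$, which as a real function vanishes to second order along $D_0$ --- what matters, and what you actually use, is only that it is smooth, nonnegative, and cuts out $D_0$.
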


If $\omega$ is constructed as above, one can check using Poincar\'e duality that the K\"ahler class is given by
\[
	[\omega] = 2 \pi (b [D_\infty] - a [D_0]).
\]

\section{Estimates along the conical K\"ahler-Ricci flow}

We now recall some of the main results concerning the unnormalized conical K\"ahler-Ricci flow using smooth approximations.

First, we remark that for any K\"ahler metric $\omega_0$ and $\sigma$ a non-trivial section of a holomorphic line bundle, equipped with a Hermitian metric $\eta$, whose vanishing locus defines a smooth divisor,
\[
	\omega^* = \omega_0 + \delta \ddbar |\sigma|^{2\alpha}_\eta
\]
is a conical K\"ahler metric with cone angle $2 \pi \alpha$ along $D = \{ \sigma = 0 \}$ for all sufficiently small $\delta>0$ \cite{Ru14}.

Furthermore, following the method of Campana-Guenancia-P\u{a}un \cite{CGP}, we can approximate such conical metrics by smooth K\"ahler metrics of the form
\[
  \omega_\epsilon = \omega_0 + \delta \ddbar \chi( |\sigma|^2_\eta + \epsilon^2)
\]
where $\chi = \chi_{\alpha,\epsilon}: [\epsilon^2,\infty) \to \mathbb R$ is the function defined by
\[
 \chi( t + \epsilon^2) = \frac 1 \alpha \int_0^t \frac {(r + \epsilon^2)^\alpha - \epsilon^{2 \alpha}} r dr,
\]
and the $\omega_\epsilon$ are smooth K\"ahler metrics for all $\epsilon>0$, satisfy $\omega_\epsilon \geq \gamma \omega_0$ for some uniform constant $\gamma >0$, and converge to $\omega^*$ in the sense of currents and in $C^\infty_{\mathrm{loc}}(X \setminus D)$ as $\epsilon$ tends to zero \cite{CGP,GP}. 

Liu-Zhang first used this method of smooth approximation to study the conical K\"ahler-Ricci flow on log Fano manifolds \cite{LiZh}, and Shen adopted this technique to obtain the following long time existence result for the unnormalized conical K\"ahler-Ricci flow.

\begin{thm}[Shen \cite{Shen}]
	If $\omega^* = \omega_0 + \delta \ddbar |\sigma|^{2 \alpha}_\eta$ is a conical K\"ahler metric, then a unique maximal solution to the conical K\"ahler-Ricci flow starting with $\omega^*$ exists on $[0,T)$ where
	\[
		T = \mathrm{sup} \{ t > 0 \big| [\omega_0] - t (c_1(X) - (1 - \alpha) [D]) \text{ is a K\"ahler class} \}
	\]
	and the solution is approximated, as $\epsilon$ tends to zero, by a sequence of smooth twisted K\"ahler-Ricci flows
	\begin{equation}\label{eq:2}
		\begin{cases}
			\frac \de {\de t} \omega_\epsilon(t) = - \Ric(\omega_\epsilon(t)) + (1-\alpha) \big( \ddbar \log ( |\sigma|_\eta^2 + \epsilon^2) + \mathrm{curv}(\eta) \big) \\
			\omega_\epsilon(0) = \omega_\epsilon := \omega_0 + \delta \ddbar \chi (|\sigma|^2_\eta + \epsilon^2),
		\end{cases}
	\end{equation}
	and the convergence is globally in the sense of currents and in $C^\infty_{\mathrm{loc}}((X \setminus D) \times [0,T))$.
\end{thm}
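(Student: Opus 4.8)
\emph{Reduction to a scalar equation.} The plan is to run the smooth approximating flows~\eqref{eq:2}, recast each as a scalar parabolic complex Monge-Amp\`ere equation, prove a priori estimates uniform in $\epsilon$, and then let $\epsilon\to0$. Fix a smooth volume form $\Omega$ on $X$, let $\chi_0:=\Ric(\Omega)=-\ddbar\log\Omega$ (a smooth closed representative of $c_1(X)$), and set $\hat\omega_t:=\omega_0-t\big(\chi_0-(1-\alpha)\,\mathrm{curv}(\eta)\big)$, a smooth family of closed $(1,1)$-forms in $[\omega_0]-t\big(c_1(X)-(1-\alpha)[D]\big)$, hence K\"ahler precisely for $t<T$. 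Writing the solution of~\eqref{eq:2} as $\omega_\epsilon(t)=\hat\omega_t+\ddbar\vp_\epsilon(t)$ and using $-\Ric(\omega_\epsilon)=\ddbar\log\omega_\epsilon^n$ and $-\ddbar\log\Omega=\chi_0$, equation~\eqref{eq:2} is equivalent, after absorbing a function of $t$ alone into $\vp_\epsilon$, to
\[
	\de_t\vp_\epsilon=\log\frac{(\hat\omega_t+\ddbar\vp_\epsilon)^n}{\Omega}+(1-\alpha)\log\big(|\sigma|_\eta^2+\epsilon^2\big),\qquad\vp_\epsilon(0)=\delta\,\chi_{\alpha,\epsilon}(|\sigma|_\eta^2+\epsilon^2).
\]
Here the forcing term $F_\epsilon:=(1-\alpha)\log(|\sigma|_\eta^2+\epsilon^2)$ is bounded above uniformly in $\epsilon$ and below by a fixed function in $L^p(X,\omega_0^n)$ for every $p$; the initial data are uniformly bounded and converge in $C^0(X)$ and $C^\infty_{\mathrm{loc}}(X\setminus D)$ to $\delta|\sigma|_\eta^{2\alpha}$ (up to a constant). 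Importantly, the conical ansatz makes the singularities of $\log\omega_\epsilon(0)^n$ and of $F_\epsilon$ cancel near $D$, so $\de_t\vp_\epsilon|_{t=0}$ is uniformly bounded.

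\emph{A priori estimates uniform in $\epsilon$.} Fix $T'<T$. (i) The maximum principle applied to $\vp_\epsilon\mp Ct$, comparing $\de_t\vp_\epsilon$ at an extremum with the ODE obtained by replacing $(\hat\omega_t+\ddbar\vp_\epsilon)^n$ by $\Omega$, and handling the $L^p$ lower bound of $F_\epsilon$ by a standard argument, gives $\|\vp_\epsilon\|_{L^\infty(X\times[0,T'])}\le C$. (ii) Differentiating, $\de_t\dot\vp_\epsilon=\Laplace_{\omega_\epsilon(t)}\dot\vp_\epsilon+\tr{\omega_\epsilon(t)}{\de_t\hat\omega_t}$, and the maximum principle applied to suitable combinations of $\dot\vp_\epsilon$ and $\vp_\epsilon$, together with (i) and the bound on $\dot\vp_\epsilon|_{t=0}$, yields $|\dot\vp_\epsilon|\le C$, i.e. $C^{-1}\Omega\le\omega_\epsilon(t)^n\le C\Omega$ on $X\times[0,T']$. (iii) The key estimate: a parabolic Chern-Lu/Aubin-Yau inequality applied to $\log\tr{\omega_0}{\omega_\epsilon(t)}-A\vp_\epsilon$ for $A$ large (using the bounded bisectional curvature of $\omega_0$, that $\ddbar\vp_\epsilon=\omega_\epsilon(t)-\hat\omega_t$ absorbs the bad terms, and (i)--(ii)) gives, for every compact $K\subset X\setminus D$, a bound $\tr{\omega_0}{\omega_\epsilon(t)}\le C_K$ on $K\times[0,T']$; with (ii) this upgrades to $C_K^{-1}\omega_0\le\omega_\epsilon(t)\le C_K\omega_0$ on $K\times[0,T']$. (iv) With the metrics uniformly equivalent on $K$, the local parabolic Calabi (third order) estimate and parabolic Schauder theory bootstrap to $\|\vp_\epsilon\|_{C^m(K'\times[\tau,T'])}\le C$ for every $m$, every compact $K'\subset X\setminus D$, and $0<\tau<T'$.

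\emph{Limit, uniqueness, exit time.} By (i)--(iv) and Arzel\`a-Ascoli a subsequence $\vp_\epsilon$ converges in $C^\infty_{\mathrm{loc}}((X\setminus D)\times[0,T))$, and in $C^0(X)$ with $\ddbar\vp_\epsilon$ converging as currents, to a limit $\vp$; then $\omega(t):=\hat\omega_t+\ddbar\vp(t)$ is a closed positive $(1,1)$-current on $X$, smooth and K\"ahler on $X\setminus D$, solving the K\"ahler-Ricci flow there, and---taking $\ddbar$ of the limiting equation and using $\ddbar\log|\sigma|_\eta^2=2\pi[D]-\mathrm{curv}(\eta)$---satisfying~\eqref{eq:1} in the sense of currents. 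A two-sided barrier argument near $D$, bounding $\vp(t)$ above and below by multiples of $|\sigma|_\eta^{2\alpha}$ plus bounded terms using the model cone metric, shows $\omega(t)$ is quasi-isometric to the model cone, i.e. a conical K\"ahler metric of angle $2\pi\alpha$ along $D$, for every $t\in(0,T)$. Uniqueness follows from the maximum principle applied to the difference of the potentials of two solutions, which satisfies a linear parabolic inequality with locally bounded coefficients on $X\setminus D$ and is globally bounded, hence vanishes. Since the estimates above require only $\hat\omega_t$ K\"ahler, the solution extends to all of $[0,T)$; conversely, a conical K\"ahler metric lies in a K\"ahler class, so $[\omega(t)]$ being K\"ahler forces $t<T$, and $T$ equals the stated supremum.

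\emph{Main obstacle.} The crux is the second-order estimate (iii), obtained uniformly in $\epsilon$ and in a form that degenerates in a controlled way toward $D$, together with the near-$D$ barrier analysis showing that the conical structure with the prescribed angle is preserved along the flow. Away from $D$ this is the parabolic Aubin-Yau/Chern-Lu argument exploiting the sign of the twisting term, but near $D$ one must construct sub- and super-solutions from the model cone metric and the Campana-Guenancia-P\u{a}un regularization $\chi_{\alpha,\epsilon}(|\sigma|_\eta^2+\epsilon^2)$; controlling this interaction between the flow and the divisor is where the real work lies.
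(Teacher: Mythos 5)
The paper does not prove this theorem: it is quoted verbatim from Shen \cite{Shen}, and the only ``proof'' in the source is the citation. Your outline is essentially the strategy of the cited proof (and of Liu--Zhang before it): regularize the cone metric by the Campana--Guenancia--P\u{a}un potentials $\chi_{\alpha,\epsilon}(|\sigma|_\eta^2+\epsilon^2)$, rewrite each twisted flow ~\eqref{eq:2} as a scalar parabolic Monge--Amp\`ere equation with forcing $(1-\alpha)\log(|\sigma|_\eta^2+\epsilon^2)$, prove estimates uniform in $\epsilon$ ($C^0$, $\dot\vp_\epsilon$, Chern--Lu away from $D$, local bootstrap), and pass to the limit. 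So the architecture is right and matches the source.

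Two concrete points need repair. First, in step (ii) the conclusion $C^{-1}\Omega\le\omega_\epsilon(t)^n\le C\Omega$ is false: from $\dot\vp_\epsilon=\log\big(\omega_\epsilon(t)^n/\Omega\big)+(1-\alpha)\log(|\sigma|_\eta^2+\epsilon^2)$, a two-sided bound on $\dot\vp_\epsilon$ gives $\omega_\epsilon(t)^n\le C\,\Omega\,(|\sigma|_\eta^2+\epsilon^2)^{-(1-\alpha)}$, which is unbounded against $\Omega$ near $D$ as $\epsilon\to0$ --- this degenerate upper bound is exactly Proposition \ref{CKRF1} of the paper and is what the later symmetry estimates rely on; you should also justify that the \emph{lower} bound on $\dot\vp_\epsilon$ propagates uniformly in $\epsilon$ rather than only holding at $t=0$. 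The error is harmless for your Chern--Lu step, which is local on compacta of $X\setminus D$, but the statement as written would be quoted falsely. Second, the uniqueness assertion is the genuinely delicate part and your one-sentence maximum principle does not suffice: the difference of two conical solutions lives on the incomplete manifold $X\setminus D$, where the maximum need not be attained and the coefficients of the linearized equation degenerate at the cone; in the literature this is handled either by Jeffres' barrier trick (perturbing by $\pm\lambda|\sigma|_\eta^{2\gamma}$ to force interior maxima) or by proving uniqueness only within the class of solutions obtained as limits of the regularization. As written, ``globally bounded, hence vanishes'' is a gap, not a proof.
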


Note that if $\sigma$ is the holomorphic section defining $D_0$, and $\eta$ is any Calabi invariant Hermitian metric on $\mathcal O_X(D_0)$, then $|\sigma|^2_\eta$ is a $U(2)/ \mathbb Z_k$-invariant function. Thus, if $\omega_0$ is a K\"ahler metric satisfying the Calabi ansatz, then $\omega_\epsilon$ also satisfies the Calabi ansatz. Moreover, the terms on the right hand side of equation ~\eqref{eq:2} are also $U(2)/\mathbb Z_k$-invariant, so $\omega_\epsilon(t)$ will satisfy the Calabi ansatz as long as the flow exists.

Now, if $\omega_\epsilon(t)$ is a solution to the twisted K\"ahler-Ricci flow ~\eqref{eq:2}, then the K\"aher class evolves by
\[
	[\omega_\epsilon(t)] = [\omega_0] - 2 \pi t \big( c_1(X) - (1-\alpha) [D_0] \big),
\]
where
\begin{equation}\label{eq:12}
	c_1(X) = \big( (1 + \frac 2 k) [D_\infty] + (1 - \frac 2 k) [D_0] \big).
\end{equation}
is the first Chern class of $X$ \cite{GH78}.

Thus we can write
\[
	[\omega_\epsilon(t)] = 2 \pi \big( b_t [D_\infty] - a_t [D_0])
\]
where
\[
	b_t = b - (1 + \frac 2 k) t
\]
and
\[
	a_t = a - (\frac 2 k - \alpha) t.
\]

In particular, if $\alpha$ and $[\omega_0]$ satisfies condition ~\eqref{eq} of Theorem \ref{main}, then $a_t \rightarrow 0$ as $t \rightarrow T^-$, while $b_t > b_T > 0$ for all $t \leq T$; and if $\alpha$ and $[\omega_0]$ satisfy condition ~\eqref{eq'}, then $a_t > a_T > 0$ and $(b_t - a_t) \to 0$ as $t \to T^-$.

Next, we recall some further estimates along the conical K\"ahler-Ricci flow.

\begin{prop}[Shen \cite{Shen}]\label{CKRF1}
If $\omega_\epsilon(t)$ is a solution to the twisted K\"ahler-Ricci flow ~\eqref{eq:2}, then for any smooth volume form $\Omega$, there exists a uniform $C>0$ independent of $\epsilon$ such that
\[
	\omega_\epsilon(t)^2 \leq C \frac \Omega {|\sigma|_\eta^{2(1-\alpha)}}
\]
\end{prop}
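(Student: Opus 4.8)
The plan is to argue at the level of the smooth approximating flows $\omega_\epsilon(t)$ of \eqref{eq:2}, recasting the claimed bound as an upper bound on the time-derivative of a K\"ahler potential and establishing it by a maximum principle in which every constant is independent of $\epsilon$. It suffices to treat one fixed smooth volume form $\Omega$, since any two are uniformly comparable. Set $\theta := -\Ric(\Omega) + (1-\alpha)\,\mathrm{curv}(\eta)$, a fixed smooth closed $(1,1)$-form representing $-2\pi\big(c_1(X) - (1-\alpha)[D_0]\big)$, and $\hat\omega_t := \omega_0 + t\theta$, so that $[\hat\omega_t] = [\omega_\epsilon(t)]$ for every $\epsilon$ and every $t \in [0,T)$. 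Writing $\omega_\epsilon(t) = \hat\omega_t + \ddbar\varphi_\epsilon(t)$ with $\varphi_\epsilon(0) = \delta\,\chi_{\alpha,\epsilon}(|\sigma|_\eta^2+\epsilon^2)$, and using $-\Ric(\omega_\epsilon) = \ddbar\log\omega_\epsilon^2$ together with the cancellation of the two $\ddbar\log(|\sigma|_\eta^2+\epsilon^2)$ contributions, equation \eqref{eq:2} reduces (after subtracting a function of time from $\varphi_\epsilon$) to the parabolic complex Monge-Amp\`ere flow
\[
	\dot\varphi_\epsilon \;=\; \log\frac{(\hat\omega_t + \ddbar\varphi_\epsilon)^2}{\Omega} \;+\; (1-\alpha)\log\big(|\sigma|_\eta^2 + \epsilon^2\big).
\]
Since $e^{\dot\varphi_\epsilon}\Omega = \omega_\epsilon(t)^2\,(|\sigma|_\eta^2+\epsilon^2)^{1-\alpha}$ and $(|\sigma|_\eta^2+\epsilon^2)^{1-\alpha} \geq |\sigma|_\eta^{2(1-\alpha)}$, the Proposition is equivalent to the bound $\dot\varphi_\epsilon \leq C$ on $X \times [0,T)$ with $C$ independent of $\epsilon$.

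I would then isolate two a priori inputs, both uniform in $\epsilon$. First, an initial bound: the explicit formula for $\chi_{\alpha,\epsilon}$ shows that $\chi_{\alpha,\epsilon}(|\sigma|_\eta^2+\epsilon^2)$ stays between $0$ and a uniform constant as $\epsilon \to 0$, so $\|\varphi_\epsilon(0)\|_{C^0}$ is uniformly bounded, and $\omega_\epsilon(0)$ is uniformly quasi-isometric to the model conical metric; hence $\omega_\epsilon(0)^2 \leq C\,\Omega/(|\sigma|_\eta^2+\epsilon^2)^{1-\alpha}$, i.e.\ $\dot\varphi_\epsilon(0) \leq C$. Second, a uniform upper bound $\sup_{X \times [0,T)}\varphi_\epsilon \leq C$; this is the standard zeroth-order estimate for such flows (integrate the Monge-Amp\`ere equation against $\Omega$, apply Jensen's inequality and the cohomological identity $\int_X\omega_\epsilon(t)^2 = [\omega_\epsilon(t)]^2$, a polynomial in $t$ hence bounded on $[0,T]$, to control the spatial average, then upgrade to a pointwise bound), essentially contained in the estimates of Liu-Zhang \cite{LiZh} and Shen \cite{Shen}.

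Granting these, the heart of the argument is a barrier. Differentiating the Monge-Amp\`ere flow in $t$ gives $\ddot\varphi_\epsilon = \Laplace_{\omega_\epsilon}\dot\varphi_\epsilon + \tr{\omega_\epsilon}{\theta}$, the term $(1-\alpha)\log(|\sigma|_\eta^2+\epsilon^2)$ being time-independent and dropping out. Choosing $\tau_0 > 0$ small enough that $\omega_0 - \tau_0\theta > 0$ (possible since $\theta$ is a fixed smooth form and $\omega_0$ a fixed K\"ahler metric), I would consider
\[
	H \;:=\; (t + \tau_0)\,\dot\varphi_\epsilon \;-\; \varphi_\epsilon \;-\; 2t.
\]
Using $\Laplace_{\omega_\epsilon}\varphi_\epsilon = 2 - \tr{\omega_\epsilon}{\hat\omega_t}$ and $(t+\tau_0)\theta = (\hat\omega_t - \omega_0) + \tau_0\theta$, one computes
\[
	\Big( \tfrac{\de}{\de t} - \Laplace_{\omega_\epsilon} \Big) H \;=\; -\,\tr{\omega_\epsilon}{(\omega_0 - \tau_0\theta)} \;\leq\; 0.
\]
Since $X$ is compact without boundary and the $\omega_\epsilon(t)$ are smooth for $\epsilon > 0$, the maximum principle forces $H$ to attain its maximum over $X \times [0,T)$ at $t = 0$, where $H = \tau_0\dot\varphi_\epsilon(0) - \varphi_\epsilon(0) \leq C$ by the first input. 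Rearranging, $(t+\tau_0)\dot\varphi_\epsilon \leq C + \varphi_\epsilon + 2t \leq C'$ by the second input, so $\dot\varphi_\epsilon \leq C'/(t+\tau_0) \leq C'/\tau_0$ on $X \times [0,T)$, uniformly in $\epsilon$. Exponentiating yields $\omega_\epsilon(t)^2 \leq C\,\Omega/(|\sigma|_\eta^2+\epsilon^2)^{1-\alpha} \leq C\,\Omega/|\sigma|_\eta^{2(1-\alpha)}$, as claimed.

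The Monge-Amp\`ere reduction and the barrier computation are routine; the substance is in the two uniform-in-$\epsilon$ inputs. I expect the uniform $C^0$ bound on $\varphi_\epsilon$ to be the main obstacle, as it is the genuinely nontrivial a priori estimate, with the uniform initial bound requiring some care in tracking the Campana-Guenancia-P\u{a}un regularization $\chi_{\alpha,\epsilon}$ near $D_0$ where the degeneration concentrates.
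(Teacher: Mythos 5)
Your proposal is correct and follows essentially the same route as the paper, which simply cites Section 2.1 of Shen's paper and notes that the estimate is equivalent to the uniform upper bound on the time-derivative of the potential --- precisely your first reduction, after which your barrier $H = (t+\tau_0)\dot\varphi_\epsilon - \varphi_\epsilon - 2t$ and maximum-principle computation reproduce the standard argument underlying that citation. The two uniform-in-$\epsilon$ inputs you isolate (the $C^0$ bound on $\varphi_\epsilon$ and the initial bound on $\dot\varphi_\epsilon$ coming from the Campana--Guenancia--P\u{a}un regularization) are exactly the ingredients supplied in \cite{Shen}, so there is no gap.
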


\begin{proof}
This estimate is proved in Section 2.1 of \cite{Shen}, and is equivalent to the uniform upper bound on the time-derivative of the potential.
\end{proof}

We have also the following second order estimate.

\begin{prop}\label{CKRF2}
Let $\omega(t)$ be a solution to the conical K\"ahler-Ricci flow on a Hirzebruch surface $X$ satisfying condition ~\eqref{eq} of Theorem ~\ref{main}. Then as $t \rightarrow T^-$, $\omega(t)$ converges weakly to a non-negative current $\omega_T$ which is a smooth K\"ahler metric on $X \setminus D_0$ and the convergence is in $C^\infty_{\mathrm{loc}}(X \setminus D_0)$.
\end{prop}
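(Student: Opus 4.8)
The plan is to exploit the Calabi symmetry to reduce to a scalar parabolic ODE-type problem and then obtain uniform $C^\infty_{\mathrm{loc}}$ estimates on compact subsets of $X \setminus D_0$ that are independent of $t \in [0,T)$. As set up in Section 4 (which I am allowed to assume, though it comes later), the conical K\"ahler-Ricci flow with Calabi symmetry on $X \setminus (D_0 \cup D_\infty)$ is governed by a potential $\varphi(\rho,t)$ with $\omega(t) = \ddbar \varphi(\cdot,t)$, and the flow becomes a scalar equation for $\varphi$ whose spatial data is controlled by the two functions $v'(\rho,t)$ and $v''(\rho,t)$. The key point is that in the contracting case~\eqref{eq}, the K\"ahler class stays uniformly bounded and bounded away from the boundary of the K\"ahler cone \emph{away from the $D_0$ direction}: we have $b_t \geq b_T > 0$ for all $t \leq T$, and only $a_t \to 0$. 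So the metric degenerates only in the fiber directions near $D_0$, and on any compact $K \Subset X \setminus D_0$ the geometry should remain uniformly comparable to a fixed background metric.

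First, I would establish a uniform two-sided bound $C^{-1}\omega_0 \leq \omega(t) \leq C \omega_0$ on compact subsets of $X \setminus D_0$. The upper bound on the volume form $\omega(t)^2 \leq C\,\Omega/|\sigma|_\eta^{2(1-\alpha)}$ comes from Proposition~\ref{CKRF1} (equivalently, a uniform upper bound on $\dot\varphi$); a lower bound on $\dot\varphi$, hence a lower volume bound, should follow from the monotonicity/convexity structure of the scalar equation together with the fact that the limiting class $[\omega_T] = 2\pi(b_T[D_\infty])$ is nef and big (in particular positive on $X \setminus D_0$). Combined with a uniform trace bound $\mathrm{tr}_{\omega_0}\omega(t) \leq C$ on $K$ — obtained via a parabolic Schwarz-lemma / Aubin--Yau computation applied to $\log \mathrm{tr}_{\omega_0}\omega - A\varphi$, where the twisting term $(1-\alpha)(\ddbar \log(|\sigma|_\eta^2+\epsilon^2) + \mathrm{curv}(\eta))$ is bounded on $K$ uniformly in $\epsilon$ — one gets the uniform equivalence of metrics on $K$. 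Here it is cleanest to run these estimates on the smooth approximants $\omega_\epsilon(t)$ of~\eqref{eq:2} and pass to the limit using the $C^\infty_{\mathrm{loc}}((X\setminus D)\times[0,T))$ convergence from Shen's theorem.

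With uniform ellipticity on $K$ in hand, the higher-order estimates are standard: a third-order (Calabi-type $C^3$) estimate bounds $|\nabla \omega(t)|$, and then local parabolic Schauder/bootstrapping gives uniform $C^\infty_{\mathrm{loc}}$ bounds on $K \times [0,T)$, independent of $t$. This yields subsequential $C^\infty_{\mathrm{loc}}(X\setminus D_0)$ convergence of $\omega(t)$ along $t_j \to T$ to a smooth K\"ahler metric on $X\setminus D_0$; to upgrade this to full convergence as $t \to T^-$, I would use that the potentials $\varphi(\cdot,t)$ are monotone in $t$ (the flow equation together with $\dot\varphi$ bounded gives $\varphi(\cdot,t)$ essentially decreasing up to a known affine-in-$t$ correction coming from the reference forms in the class), so the limit is unique, and the current $\omega_T = \lim_{t\to T^-}\omega(t)$ is well-defined, non-negative, and smooth K\"ahler off $D_0$. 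Finiteness of $T$ and the non-negativity of the limiting current as a global current on $X$ follow from the class computation $[\omega(t)] \to 2\pi b_T[D_\infty]$, which is nef.

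I expect the main obstacle to be the \emph{uniform} lower bound for the metric (equivalently for $\dot\varphi$, equivalently for the volume form) on compact subsets of $X \setminus D_0$ as $t \to T^-$: the limiting class is only nef and big, not K\"ahler, so a naive application of the maximum principle to $\dot\varphi$ or to $\log(\omega(t)^2/\Omega)$ risks losing control near $D_0$ and one must localize carefully, using a cutoff supported away from $D_0$ together with the specific structure of the Calabi-symmetric scalar equation (where everything is a function of one real variable $\rho$ and the degeneration is confined to $\rho \to -\infty$). The presence of the $\epsilon$-regularized twisting term adds a further bookkeeping burden — one must check all constants are independent of $\epsilon$ — but since that term is uniformly bounded in $C^\infty_{\mathrm{loc}}(X \setminus D)$, it does not pose a genuine difficulty. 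Everything else is a routine adaptation of the Song--Weinkove estimates for the K\"ahler-Ricci flow with Calabi symmetry to the conical setting.
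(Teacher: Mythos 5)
Your route is genuinely different from the paper's. The paper disposes of this proposition in three lines: it invokes the proof of Theorem 1.3 of Shen \cite{Shen}, whose hypotheses are verified by two cohomological facts --- that $[\omega(t)] - \lambda [D_0]$ is a K\"ahler class for all $t \in [0,T)$ and all sufficiently small $\lambda > 0$, and that the stable base locus of the limit class $[\omega_0] - 2\pi T(c_1(X) - (1-\alpha)[D_0])$ is exactly $D_0$. All of the hard analysis (local lower bound on the volume form, Schwarz lemma, Calabi third-order estimate, bootstrapping) is packaged inside that theorem. You instead propose to reprove this analysis directly using the Calabi symmetry.

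Within your direct approach there is a genuine gap, and it sits exactly where you flag ``the main obstacle'': the uniform lower bound on the volume form (equivalently on $\dot\varphi$) and the localization of the parabolic Schwarz lemma to compact sets $K \subset\subset X \setminus D_0$. Applying the maximum principle to $\log \mathrm{tr}_{\omega_0}{\omega} - A\varphi$ ``on $K$'' does not close, because the extremum may sit on $\partial K$ or migrate toward $D_0$; and your proposed remedy --- a cutoff supported away from $D_0$ --- does not work either, since differentiating the cutoff produces error terms that can only be absorbed if one already has the metric bounds one is trying to prove. The device that actually makes this step go through (in Tian--Zhang, Song--Weinkove, and in Shen's proof) is Tsuji's barrier trick: one adds $\lambda \log |\sigma|^2_\eta$ to the test quantity, which tends to $-\infty$ along $D_0$ and forces the extremum into the region where the perturbed reference form is uniformly K\"ahler; this is legitimate precisely because $[\omega(t)] - \lambda[D_0]$ stays K\"ahler --- the very fact the paper checks in order to cite Shen. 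Without naming that mechanism (or an equivalent one exploiting the explicit one-variable structure of $v_\epsilon$), the chain ``uniform ellipticity on $K$, then Calabi estimate, then Schauder'' has no starting point. The remainder of your outline (passing estimates from $\omega_\epsilon(t)$ to $\omega(t)$, uniqueness of the limit from the monotone-up-to-affine-correction potentials, nonnegativity of $\omega_T$ from the class computation) is sound.
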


\begin{proof}
This follows from the proof of Theorem 1.3 in \cite{Shen}. Indeed under our assumptions
\[
  [\omega(t)] - \lambda [D_0]
\] 
is a K\"ahler class for all $t\in[0,T)$ for every $\lambda>0$ sufficiently small, and the stable base locus of $[\omega_0] - 2 \pi T(c_1(X) - (1-\alpha)[D_0])$ is equal to $D_0$.
\end{proof}

\section{The conical K\"ahler-Ricci flow with Calabi symmetry}

Now we use Calabi symmetry to reduce the twisted K\"ahler-Ricci flows ~\eqref{eq:2} to a scalar parabolic equation.

First, we remark that $\mathrm{dim}_{\mathbb C} H^0(X,\mathcal O_X(D_0)) = 1$ so that up to scaling there is a unique non-trivial global holomorphic section of this line bundle and this section vanishes to first order along $D_0$.

Moreover, in the coordinate chart $U$ the function $\sigma(z,w) = w$ is holomorphic, vanishes to first order along $D_0$, and can be extended to a global holomorphic section of $\mathcal O_{X}(D_0)$.

Next, we define an explicit $U(2)/\mathbb Z_k$-invariant Hermitian metric on $\mathcal O_X(D_0)$ by 
\[
	\eta_0 = \frac h {1 + e^\rho}.
\]
Then
\[
	|\sigma|_{\eta_0}^2 = \frac {e^\rho}{1 + e^\rho}
\]
is a well-defined $U(2)/\mathbb Z_k$-invariant $C^\infty$ function on $X$, and so $\eta_0$ is indeed a Hermitian metric on this line bundle.

If $\eta$ is any other $U(2)/\mathbb Z_k$-invariant Hermitian metric on this line bundle, then
\[
  \eta = e^{-\psi} \eta_0
\]
for some smooth globally defined function $\psi=\psi(\rho)$. In particular, $\psi$ and all its derivatives are uniformly bounded. Up to rescaling $\eta$, we can always assume that $\psi(0) = 0$.

We calculate
\[
	|\sigma|^2_\eta = \frac {e^{-\psi+\rho}}{1 + e^\rho},
\]
and
\[
	\mathrm{curv}(\eta) = - \ddbar \rho + \ddbar \log ( 1 + e^\rho) + \ddbar \psi.
\]
So on $X \setminus (D_0 \cup D_\infty)$,
\[
 \ddbar \log(|\sigma|^2_\eta + \epsilon^2) + \mathrm{curv}(\eta) = \ddbar \Big( \log \big( e^{-\psi + \rho} + \epsilon^2 e^\rho + \epsilon^2 \big) - \rho + \psi \Big).
\]

Next, if $\omega = \ddbar v(\rho)$ is a smooth K\"ahler metric satisfying the Calabi ansatz, then we can calculate the Ricci curvature as follows:
\begin{align*}
	\Ric(\omega) 
	& = - \ddbar \log (\omega^2) \\
	& = - \ddbar \log \big( k v'(\rho) v''(\rho) p^* \omega_{\mathrm{fs}} \wedge \im \frac {dw \wedge d \ov w}{|w|^2} \big) \\
	& = - \ddbar \log v'(\rho) - \ddbar \log v''(\rho) + \frac 2 k \ddbar \rho.
\end{align*}

Thus $\omega_\epsilon(t) = \ddbar v_\epsilon(t)$ solves the smooth twisted K\"ahler-Ricci flow ~\eqref{eq:2} if and only if $v_\epsilon(t)$ solves the parabolic equation
\begin{equation}\label{eq:4}
	\begin{cases}
	\frac \de {\de t} v_\epsilon(t,\rho) = \log v'_\epsilon + \log v_\epsilon'' - \frac 2 k \rho + (1 - \alpha) \log \big( 1 + \epsilon^2 e^\psi(1 + e^{-\rho}) \big) + c_t \\
	v_\epsilon(0,\rho) =  v(\rho) + \delta \chi(|\sigma|^2_\eta + \epsilon^2)
	\end{cases}
\end{equation}
where
\[
  c_t = - \log v_\epsilon'(t,0) - \log v_\epsilon''(t,0) - (1-\alpha) \log (1 + 2 \epsilon^2)
\]
is chosen so that
\[
	v_\epsilon(t,0) = 0
\]
for all $t \in [0,T)$.

We calculate the following evolution equations for $v_\epsilon$:
\begin{equation}\label{eq:5}
	\frac \de {\de t} v_\epsilon' = - \frac 2 k + \frac {v_\epsilon''}{v_\epsilon'} + \frac {v_\epsilon'''}{v_\epsilon''} + (1-\alpha)\Big( \log \big( 1 + \epsilon^2 e^{\psi}(1 + e^{-\rho}) \big) \Big)',
\end{equation}
\begin{equation}\label{eq:6}
	\frac \de {\de t} v_\epsilon'' 
	= - \frac {(v_\epsilon'')^2}{(v_\epsilon')^2} + \frac {v_\epsilon'''}{v_\epsilon'} - \frac {(v_\epsilon''')^2}{(v_\epsilon'')^2} + \frac {v_\epsilon^{(4)}}{v_\epsilon''} + (1-\alpha)\Big( \log \big( 1 + \epsilon^2 e^{\psi} (1 + e^{-\rho}) \big) \Big)'',
\end{equation}
and
\begin{align}\label{eq:7}
	\frac \de {\de t} v_\epsilon''' = \frac {v_\epsilon^{(4)}}{v_\epsilon'}  - 3 \frac{v_\epsilon'' v_\epsilon'''}{(v_\epsilon')^2} + 2 \frac {(v_\epsilon'')^3}{(v_\epsilon')^3} & + \frac {v_\epsilon^{(5)}}{v_\epsilon''} - 3 \frac {v_\epsilon'''v_\epsilon^{(4)}}{(v_\epsilon'')^2} + 2 \frac {(v_\epsilon''')^3}{(v_\epsilon'')^3} \\
	& + (1-\alpha) \Big( \log \big( 1 + \epsilon^2 e^\psi(1 + e^{-\rho}) \big) \Big)'''. \nonumber
\end{align}

To control the excess terms in these equations we make use of the following lemma.

\begin{lem}\label{eq:9}
 Let
\[
 \theta = 1 + \epsilon^2 e^\psi + \epsilon^2 e^{\psi - \rho},
\]
 then there exists a uniform constant $C>0$, independent of $\epsilon$ and depending only on $\psi$, such that
\[
 \big| (\log \theta)' \big| + \big| (\log \theta)'' \big| + \big| ( \log \theta )''' \big| \leq C
\]
\end{lem}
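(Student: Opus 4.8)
The plan is to exploit the algebraic structure of $\theta$ rather than estimate its derivatives head-on. First I would rewrite
\[
 \theta = 1 + \epsilon^2 \phi, \qquad \phi = \phi(\rho) := e^{\psi}\big(1 + e^{-\rho}\big) > 0,
\]
which is exactly the expression appearing inside the logarithm in~\eqref{eq:4}. The apparent difficulty is that $\phi \to \infty$ as $\rho \to -\infty$ (i.e. near $D_0$), so neither $\theta$ nor $\theta'$ is bounded. The point, however, is that the relevant quantity is $\log\phi = \psi + \log(1 + e^{-\rho})$: since $\psi$ and all its derivatives are uniformly bounded, and since $\big(\log(1+e^{-\rho})\big)' = -(1+e^{\rho})^{-1}$ is a smooth function all of whose derivatives are bounded on $\mathbb R$, there is a constant $C_1 = C_1(\psi) > 0$ with $\big|(\log\phi)^{(j)}\big| \le C_1$ for $j = 1,2,3$.

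Next I would introduce the damping factor
\[
 \lambda := \frac{\epsilon^2\phi}{1 + \epsilon^2\phi} \in (0,1), \qquad \lambda' = \lambda(1-\lambda)(\log\phi)',
\]
and compute
\[
 (\log\theta)' = \frac{\epsilon^2\phi'}{1 + \epsilon^2\phi} = \lambda\,(\log\phi)', \qquad (\log\theta)'' = \lambda(1-\lambda)\big((\log\phi)'\big)^2 + \lambda\,(\log\phi)'',
\]
and, differentiating once more and again substituting $\lambda' = \lambda(1-\lambda)(\log\phi)'$, a similar expression for $(\log\theta)'''$ as a universal polynomial in $\lambda$, $1-\lambda$, $(\log\phi)'$, $(\log\phi)''$ and $(\log\phi)'''$. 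Since $0 < \lambda < 1$ and each $(\log\phi)^{(j)}$ is bounded by $C_1$, every term is controlled by a constant depending only on $\psi$, and summing the three estimates gives the lemma.

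The computation is entirely elementary; the only place requiring any thought is the region $\rho \to -\infty$, and this is precisely where replacing $\phi$ by $\log\phi$ removes the exponential growth while the bounded factor $\lambda \in (0,1)$ absorbs the remaining $\epsilon$-dependence, so I do not anticipate a genuine obstacle.
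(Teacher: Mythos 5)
Your argument is correct. It proves the same estimate by a genuinely cleaner decomposition than the paper's. The paper's proof works directly with $\theta$: it first establishes the lower bound $\theta \geq \epsilon^2 C^{-1}(1+e^{-\rho})$, then computes $\theta'$, $\theta''$, $\theta'''$ explicitly in terms of $\psi$ and its derivatives and checks that each is bounded above by $\epsilon^2 C(1+e^{-\rho})$, so that the ratios $|\theta^{(j)}/\theta|$ (and hence the logarithmic derivatives, written as polynomials in these ratios) are uniformly bounded. You instead factor $\theta = 1+\epsilon^2\phi$ and observe that every logarithmic derivative of $\theta$ is a universal polynomial in the damping factor $\lambda = \epsilon^2\phi/(1+\epsilon^2\phi)\in(0,1)$ and the quantities $(\log\phi)^{(j)}$, which are bounded because $\log\phi = \psi + \log(1+e^{-\rho})$ has bounded derivatives (even though $\log\phi$ itself is unbounded as $\rho\to-\infty$). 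The two proofs exploit the same underlying fact --- that the $\epsilon^2$-part of $\theta$ never exceeds $\theta$ and has bounded logarithmic derivatives --- but your bookkeeping via $\lambda$ and the identity $\lambda' = \lambda(1-\lambda)(\log\phi)'$ avoids any explicit computation of $\theta''$ and $\theta'''$ and makes the uniformity in $\epsilon$ manifest at a glance; the paper's version is more computational but produces the intermediate bounds $|\theta^{(j)}|\leq \epsilon^2 C(1+e^{-\rho})$ explicitly. Either proof suffices for the applications in equations \eqref{eq:5}--\eqref{eq:7}.
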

\begin{proof}
First, we note that $\theta$ is bounded from below,
\begin{equation}\label{eq:3}
 \theta = 1 + \epsilon^2 e^\psi + \epsilon^2 e^{\psi - \rho} \geq \epsilon^2 C^{-1} (1 + e^{-\rho}).
\end{equation}

Next, to estimate
\[
	(\log \theta)' = \frac {\theta'}{\theta},
\]
\[
 (\log \theta)'' = \frac {\theta''}{\theta} - \frac {(\theta')^2}{\theta^2},
\]
and
\[
 ( \log \theta )''' = \frac{\theta'''}{\theta} - 3 \frac{\theta''\theta'}{\theta^2} + 2 \frac {(\theta')^3}{\theta^3},
\]
it suffices to bound $|\theta'/\theta|$, $|\theta''/\theta|$, and $|\theta'''/\theta|$. We calculate
\[
 \theta' = \epsilon^2 e^\psi (1 + e^{-\rho}) \psi' - \epsilon^2 e^\psi e^{-\rho},
\]
\[
 \theta'' = \epsilon^2 e^\psi(1 + e^{-\rho})( \psi'' + (\psi')^2) + \epsilon^2 e^\psi e^{-\rho}(1 - 2 \psi'),
\]
and
\[
 \theta''' = \epsilon^2 e^{-\psi}(1 + e^{-\rho}) (\psi''' + 3 \psi''\psi' + (\psi')^3) + \epsilon^2 e^\psi e^{-\rho}( - \psi'' - 3(\psi')^2  + 3 \psi' -1),
\]
so that
\begin{align*}
	|\theta'| & \leq \epsilon^2 C (1 + e^{-\rho})|\psi'| \\
	& \leq \epsilon^2 C (1 + e^{-\rho}),
\end{align*}
\begin{align*}
	|\theta''| & \leq \epsilon^2 C (1 + e^{-\rho})( |\psi''| + |\psi'|^2) + \epsilon^2 C e^{-\rho}(|\psi'| + 1) \\
	& \leq \epsilon^2 C (1 + e^{-\rho}),
\end{align*}
and
\begin{align*}
	|\theta'''| & \leq \epsilon^2 C (1 + e^{-\rho})(|\psi'''| + |\psi''||\psi'| + |\psi'|^3) + \epsilon^2 C e^{-\rho} (|\psi''| + |\psi'| + 1) \\
	& \leq \epsilon^2 C(1+e^{-\rho}).
\end{align*}
Combined with ~\eqref{eq:3}, this proves the estimates. \end{proof}

\section{Estimates Along the Flow with Symmetry in the Contracting Case}

In this section we prove estimates on the evolving potential under the assumption that the initial K\"ahler class satisfies condition ~\eqref{eq} of Theorem \ref{main}. In particular, we have that $a_t \to 0$ as $t \to T^-$ and $b_t - a_t > b_T - a_T > 0$.

We first prove estimates on the derivative of the potential. The proof is similar to Lemma 4.4 in \cite{SoWe11b}.

\begin{lem}\label{lemma1}
There is a uniform constant $C>0$ independent of $\epsilon$ such that
\[
  v'_\epsilon \leq C e^{\alpha \rho/2} + a_t
\]
\end{lem}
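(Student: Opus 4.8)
The plan is to establish the bound $v'_\epsilon \le C e^{\alpha\rho/2} + a_t$ by a parabolic maximum principle argument applied to a suitable quantity built from $v'_\epsilon$, modeled on Lemma 4.4 of \cite{SoWe11b}. First I would observe that at the initial time the bound holds: since $v(\rho)$ is a fixed Calabi-ansatz potential with $v' \le b$ and $\delta \chi(|\sigma|^2_\eta+\epsilon^2)$ has derivative (in $\rho$) controlled uniformly — indeed $|\sigma|^2_\eta = e^{-\psi+\rho}/(1+e^\rho)$ so $\tfrac{\partial}{\partial\rho}|\sigma|^2_\eta$ is comparable to $e^\rho/(1+e^\rho)^2 \le e^{\rho}$, and $\chi' \le C t^{\alpha - 1}$ near $0$ — one checks the $\ddbar|\sigma|^{2\alpha}$-type term contributes something bounded by $C e^{\alpha\rho/2}$ for small $\rho$ (i.e. near $D_0$, where $e^\rho \to 0$), while for large $\rho$ everything is uniformly bounded and absorbed into the constant. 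The asymmetry of the target function ($e^{\alpha\rho/2}$ blowing up only as $\rho \to +\infty$, not as $\rho\to-\infty$) matches the geometry: near $D_\infty$ the metric stays smooth, near $D_0$ the conical behavior forces the $e^{\alpha\rho/2}$ growth.

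Next I would set $Q = v'_\epsilon - a_t - C e^{\alpha\rho/2}$ (or work with $e^{-\alpha\rho/2}(v'_\epsilon - a_t)$ to get a quantity that is bounded on all of $\mathbb{R}$) and compute its evolution using \eqref{eq:5}. Since $\tfrac{\partial}{\partial t}a_t = -(\tfrac2k - \alpha)$, the $-\tfrac2k$ and $\tfrac{v_\epsilon''}{v_\epsilon'} + \tfrac{v_\epsilon'''}{v_\epsilon''}$ terms are the main content; the point is that $\tfrac{v_\epsilon''}{v_\epsilon'} + \tfrac{v_\epsilon'''}{v_\epsilon''}$ is (up to sign) a linear second-order operator in $v_\epsilon'$ with respect to the evolving metric, so $Q$ satisfies a drift-diffusion inequality $\partial_t Q \le \mathcal{L} Q + (\text{error})$, where $\mathcal{L}$ is the heat operator of $\omega_\epsilon(t)$ acting in the $\rho$-variable and the error terms come from the twisting term, controlled by Lemma \ref{eq:9} to be $O(1-\alpha)$ times a uniform constant. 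One then needs the quantity $Ce^{\alpha\rho/2}$ to be a supersolution: plugging $e^{\alpha\rho/2}$ into the relevant operator produces terms like $\tfrac{\alpha^2}{4}\tfrac{e^{\alpha\rho/2}}{v_\epsilon''}$ against the favorable sign, and one arranges $C$ large (depending on $\alpha$, $k$, $\psi$, the initial data but not $\epsilon$) so that at an interior spatial maximum of $Q$ the inequality forces $Q \le 0$.

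The main obstacle, as usual in Calabi-symmetry arguments, is handling the boundary behavior as $\rho \to \pm\infty$, i.e. the behavior of $v'_\epsilon$ at $D_0$ and $D_\infty$, to ensure that the maximum principle can actually be applied — either by showing the relevant quantity attains its supremum at an interior point, or by analyzing the boundary values directly. At $\rho \to +\infty$ (near $D_0$) this requires knowing that $v'_\epsilon \to a_t$ with the correct rate (this is where the conical boundary condition and the structure of the approximating metrics $\omega_\epsilon$ enter, via Lemma \ref{calabi} adapted to the conical setting), and at $\rho \to -\infty$ (near $D_\infty$) that $v'_\epsilon \to b_t$ which is uniformly bounded and hence harmless. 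A secondary technical point is that the estimate must be uniform in $\epsilon$: all error terms from the $\epsilon$-regularization must be bounded independently of $\epsilon$, which is exactly what Proposition \ref{CKRF1} and Lemma \ref{eq:9} are set up to provide. I would also keep in mind that this lemma is the first in a chain — it will feed into lower bounds on $v'_\epsilon$ and into estimates on $v''_\epsilon$ — so the precise form of the bound ($e^{\alpha\rho/2}$, not merely $e^{\alpha\rho}$ or a cruder exponent) is important and should be tracked carefully through the supersolution computation.
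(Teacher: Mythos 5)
The paper's proof is far more direct than what you propose, and your maximum-principle sketch has a gap at its central step. The actual argument simply applies the already-established volume form bound of Proposition \ref{CKRF1}: comparing $\omega_\epsilon(t)^2 = v'_\epsilon v''_\epsilon\, \ddbar\rho\wedge\im\de\rho\wedge\db\rho$ with the reference volume form built from $\widehat v(\rho) = a\rho + (b-a)\log(e^\rho+1)$ yields $v'_\epsilon v''_\epsilon \leq C e^{\alpha\rho}$, and since $v'_\epsilon v''_\epsilon = \tfrac12\big((v'_\epsilon)^2\big)'$ one integrates in $\rho$ from $-\infty$ (where $v'_\epsilon \to a_t$ by Lemma \ref{calabi}) to get $(v'_\epsilon)^2 - a_t^2 \leq Ce^{\alpha\rho}$, from which the claim follows by taking square roots. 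No new maximum principle is run for this lemma; all the parabolic input is already contained in Proposition \ref{CKRF1}, which you mention only as a source of $\epsilon$-uniformity rather than as the engine of the proof.

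Your barrier argument, as described, does not close. Writing \eqref{eq:5} as $\de_t v'_\epsilon = \tfrac1{v''_\epsilon}(v'_\epsilon)'' + \tfrac1{v'_\epsilon}(v'_\epsilon)' - \tfrac2k + (1-\alpha)(\log\theta)'$ is fine, but $Ce^{\alpha\rho/2}+a_t$ is not a supersolution: the term $\tfrac{C\alpha^2}{4}\tfrac{e^{\alpha\rho/2}}{v''_\epsilon}$ carries the unfavorable sign, grows when you enlarge $C$ (so choosing $C$ large cannot fix it), and there is no a priori positive lower bound on $v''_\epsilon$, which tends to $0$ at both ends of $\mathbb R$. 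Even if one exploits $Q'=0$ at a spatial maximum to identify $v''_\epsilon$ with the derivative of the barrier and cancel these terms, one is left with the error $(1-\alpha)(\log\theta)'$, which Lemma \ref{eq:9} bounds only in absolute value, so no contradiction results without further modifying the test quantity. Separately, you have the orientation of the coordinate reversed in places: since $\rho = \log|w|^2_h$ and $D_0 = \{w=0\}$, the cone divisor sits at $\rho\to-\infty$ and $D_\infty$ at $\rho\to+\infty$; the content of the lemma is the decay rate $v'_\epsilon - a_t \leq Ce^{\alpha\rho/2}\to 0$ near $D_0$, not growth of $v'_\epsilon$ there, and near $D_\infty$ the bound is essentially vacuous because $v'_\epsilon \leq b_t$.
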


\begin{proof}

Define a smooth reference metric
\begin{equation}\label{eq:8}
	\widehat \omega = \ddbar \widehat v(\rho)
\end{equation}
where
\begin{equation}\label{eq:8'}
	\widehat v(\rho) = a \rho + (b-a) \log(e^\rho + 1).
\end{equation}
Then $\widehat v$ is smooth,
\[
	\widehat v'(\rho) = a + (b-a) \frac {e^\rho}{1 + e^\rho},
\]
and
\[
	\widehat v''(\rho) = (b-a) \frac {e^\rho}{(1 + e^\rho)^2}.
\]
We have
\[
	0 < a < \widehat v'(\rho) < b,
\]
\[
	0 < \widehat v''(\rho) < (b-a)e^\rho,
\]
and $\widehat \omega$ is a smooth K\"ahler metric in the same class as $\omega_0$. Thus
\begin{align*}
	\widehat \omega^2 
	& = \widehat v'(\rho) \widehat v''(\rho) \ddbar \rho \wedge \im \de \rho \wedge \db \rho \\
	& \leq b (b-a) e^\rho \ddbar \rho \wedge \im \de \rho \wedge \db \rho
\end{align*}
is a smooth volume form, and so by Proposition \ref{CKRF1} there is a uniform constant such that
\[
	\frac {\omega_\epsilon(t)^2}{\widehat \omega^2} = \frac {v_\epsilon' v_\epsilon''}{ b(b-a)} e^{-\rho} \leq \frac C {|\sigma|_\eta^{2(1-\alpha)}} \leq C e^{-(1-\alpha) \rho}
\]
Hence
\[
 v_\epsilon' v_\epsilon'' = \frac 1 2 ((v_\epsilon')^2)' \leq C e^{\alpha \rho},
\]
and therefore
\[
	(v_\epsilon')^2 - a_t^2 \leq C e^{\alpha \rho},
\]
from which the desired inequality follows.
\end{proof}

The previous lemma could be improved for $\rho$ large by proving a similar estimate for $(b_t - v_\epsilon')$, however we are primarily concerned with the behavior of $v_\epsilon$ as $\rho \to - \infty$ since the metric is already bounded away from $D_0$.

\begin{lem}\label{lemma2}
There is a uniform constant $C>0$ independent of $\epsilon$ such that
\[
  v''_\epsilon \leq C (v'_\epsilon-a_t)(b_t - v'_\epsilon) \leq C e^{\alpha \rho/2}
\]
\end{lem}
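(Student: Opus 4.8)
The plan is to derive a parabolic differential inequality for the quantity $Q = v_\epsilon'' / \big((v_\epsilon' - a_t)(b_t - v_\epsilon')\big)$ and apply the maximum principle, following the strategy of Song-Weinkove for the smooth K\"ahler-Ricci flow with Calabi symmetry. Observe first that $v_\epsilon'$ ranges over $(a_t, b_t)$ as $\rho$ runs over $\mathbb{R}$: at the level of the smooth approximations, $v_\epsilon'(t, \rho) \to a_t$ as $\rho \to -\infty$ (the $D_0$ end) and $v_\epsilon'(t,\rho) \to b_t$ as $\rho \to +\infty$ (the $D_\infty$ end), and these boundary behaviors are forced by the Calabi ansatz extension conditions in Lemma \ref{calabi} applied to $v_\epsilon(t, \cdot)$, whose class is $2\pi(b_t[D_\infty] - a_t[D_0])$. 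Hence the denominator $(v_\epsilon' - a_t)(b_t - v_\epsilon')$ is strictly positive on all of $X \setminus (D_0 \cup D_\infty)$, and $Q$ is a well-defined smooth positive function there; the second inequality $Q \cdot (v_\epsilon' - a_t)(b_t - v_\epsilon') \leq C e^{\alpha\rho/2}$ then follows immediately from the first inequality combined with Lemma \ref{lemma1}, since $(v_\epsilon' - a_t) \leq C e^{\alpha\rho/2}$ and $(b_t - v_\epsilon') \leq b_t \leq b$.

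The core is the bound $Q \leq C$. I would change variables from $\rho$ to $s = v_\epsilon'(t, \rho)$, which is legitimate since $v_\epsilon'' > 0$, so that $\partial_\rho = v_\epsilon'' \partial_s$; in these coordinates the third- and fourth-order terms in the evolution equations \eqref{eq:6} for $v_\epsilon''$ become expressible through $\varphi := v_\epsilon''$ viewed as a function of $s$ and its $s$-derivatives. This is exactly the reduction that makes the Song-Weinkove computation work: the PDE for $\varphi(s)$ becomes a (fully nonlinear but tractable) parabolic equation, and $Q = \varphi/\big((s - a_t)(b_t - s)\big)$. One then computes $\partial_t Q$ at an interior spatial maximum, using $\partial_s Q = 0$ and $\partial_s^2 Q \leq 0$ there, and also has to account for the time-dependence of $a_t, b_t$ (which evolve linearly with known slopes $\dot a_t = -(\frac{2}{k} - \alpha)$, $\dot b_t = -(1 + \frac 2k)$). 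The twisting terms $(1-\alpha)\big(\log(1 + \epsilon^2 e^\psi(1 + e^{-\rho}))\big)''$ are controlled, uniformly in $\epsilon$, by Lemma \ref{eq:9}; crucially one must check these error terms are absorbed rather than destroying the sign, which is where the structure of the denominator $(s-a_t)(b_t-s)$ (vanishing precisely at the two ends, matching where the error terms could blow up) is used. The conclusion is that $\max_x Q(t, \cdot)$ satisfies an ODE inequality of the form $\frac{d}{dt} Q_{\max} \leq C_1 Q_{\max}^2 \cdot(\text{something nonpositive}) + C_2$ — more precisely, the leading term should come in with a favorable sign so that $Q_{\max}$ cannot blow up — together with the bound on $Q$ at $t = 0$ coming from the explicit initial potential $v + \delta\chi(|\sigma|^2_\eta + \epsilon^2)$, which is smooth and Calabi-invariant and therefore has finite $Q$. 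Since $T < \infty$ this gives a uniform bound, independent of $\epsilon$.

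The main obstacle I anticipate is twofold: first, verifying that the parabolic maximum principle applies despite $Q$ being defined only on the non-compact set $X \setminus (D_0 \cup D_\infty)$ — one needs that $Q$ extends continuously (with finite values) to $D_0$ and $D_\infty$, or that the maximum is not escaping to the ends; near $D_\infty$ the metric $v_\epsilon(t)$ is smooth so $v_\epsilon''$ behaves like $(b_t - v_\epsilon')$ times a bounded factor, and near $D_0$ one must use the conical structure (the approximation $\omega_\epsilon$ is smooth but with a $\rho$-dependence that, as $\epsilon \to 0$, degenerates in a controlled conical way) to see $v_\epsilon'' \sim (v_\epsilon' - a_t) \times O(1)$. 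Second, and more delicate, is getting the sign of the dominant term in $\frac{d}{dt} Q_{\max}$ right: this requires a careful bookkeeping of the third- and fourth-derivative terms after the change of variables, and one needs to verify that the combination arising from $-\frac{(v_\epsilon''')^2}{(v_\epsilon'')^2} + \frac{v_\epsilon^{(4)}}{v_\epsilon''}$ together with the terms coming from differentiating the denominator produces a nonpositive contribution at the maximum. I would handle this by expanding everything in the $s$-variable and organizing the computation exactly as in Lemma 4.5 of \cite{SoWe11b}, then checking that the additional $(1-\alpha)$-twisting terms, controlled by Lemma \ref{eq:9}, only contribute bounded corrections.
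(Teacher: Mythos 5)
Your proposal follows essentially the same route as the paper: the paper applies the parabolic maximum principle to $H_\epsilon = \log\big(v_\epsilon''/((v_\epsilon'-a_t)(b_t-v_\epsilon'))\big)$, controls the behavior as $\rho\to\pm\infty$ exactly as you do via Lemma \ref{calabi}, absorbs the twisting terms with Lemma \ref{eq:9}, and obtains the second inequality from Lemma \ref{lemma1}. The only differences are presentational: the paper computes directly in the $\rho$-variable (no passage to the momentum coordinate $s=v_\epsilon'$ is needed, and the time-dependence of $a_t,b_t$ enters simply through $\dot a_t$ and $\dot b_t$), and the sign bookkeeping you defer works out exactly as you predict, yielding $0\le C + Ce^{H_\epsilon} - C^{-1}e^{2H_\epsilon}$ at the maximum.
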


While the conclusion of this lemma is similar to Lemma 4.5 in \cite{SoWe11b}, the extra terms in our parabolic equation result in additional difficulties not present is the proof given there.

\begin{proof}

Define
\[
	H_\epsilon = \log \frac {v_\epsilon''}{(v'_\epsilon-a_t)(b_t - v'_\epsilon)}.
\]

We claim that for each $\epsilon>0$ and $t \in [0,T)$ fixed, $H_\epsilon$ is bounded from above as $\rho \rightarrow \pm \infty$. Indeed, since $\omega_\epsilon(t)$ remains smooth for all $t < T$, by Lemma \ref{calabi} there exist smooth functions $v_{0,\epsilon},v_{\infty,\epsilon} : [0,\infty) \times [0,T) \rightarrow \mathbb R$ with $v'_{0,\epsilon}(0,t)>0$ and $v'_{\infty,\epsilon}(0,t)>0$ such that
\begin{align*}
	v_\epsilon(\rho,t) 
	& = v_{0,\epsilon}(e^\rho,t) + a_t \rho \\
	& = v_{\infty,\epsilon}(e^{-\rho},t) + b_t \rho
\end{align*}
so that e.g. as $\rho$ tends to $-\infty$,
\begin{align*}
	e^{H_\epsilon} & = \frac {v''_\epsilon}{(v'_\epsilon - a_t)(b_t - v'_\epsilon)} \\
	& = \frac {e^{2 \rho}v_{0,\epsilon}'' + e^\rho v_{0,\epsilon}'}{e^\rho v_{0,\epsilon}'(b_t - a_t - e^\rho v'_{0,\epsilon})} \\
	& = \frac 1 {(b_t - a_t - e^\rho v'_{0,\epsilon})} + \frac {e^\rho v_{0,\epsilon}''}{v_{0,\epsilon}' (b_t - a_t - e^\rho v_{0,\epsilon}')} \\
	& \leq \frac 1 {(b_t - a_t)} + 1
\end{align*}
where we used that $v_{0,\epsilon}'(0)>0$ in the final inequality to obtain that the second term is converging to zero. The estimate as $\rho$ tends to $+ \infty$ is similar.

Next, we calculate
\begin{equation}
H_\epsilon' = \frac {v_\epsilon'''}{v_\epsilon''} - \frac {v_\epsilon''}{v_\epsilon' - a_t} + \frac {v_\epsilon''}{b_t - v_\epsilon'},
\end{equation}
\begin{equation}
H_\epsilon'' = \frac {v_\epsilon^{(4)}}{v_\epsilon''} - \frac {(v_\epsilon''')^2}{(v_\epsilon'')^2} - \frac {v_\epsilon'''}{v_\epsilon'-a_t} + \frac {(v_\epsilon'')^2}{(v_\epsilon' - a_t)^2} + \frac {v_\epsilon'''}{b_t - v_\epsilon'} + \frac {(v_\epsilon'')^2}{(b_t - v_\epsilon')^2},
\end{equation}
and, making use of equations ~\eqref{eq:5}, ~\eqref{eq:6}, and Lemma \ref{eq:9},
\begin{align*}
	\frac {\de H_\epsilon}{\de t} \leq & \frac 1 {v''_\epsilon} \big( - \frac {(v_\epsilon'')^2}{(v_\epsilon')^2} + \frac {v_\epsilon'''}{v_\epsilon'} - \frac {(v_\epsilon''')^2}{(v_\epsilon'')^2} + \frac{v_\epsilon^{(4)}}{v_\epsilon''} + (1-\alpha)C(\psi) \big) \\
	& - \frac  1 {v'_\epsilon - a_t} \big( \frac {v_\epsilon''}{v_\epsilon'} + \frac {v_\epsilon'''}{v_\epsilon''} - (1-\alpha)C(\psi) - \alpha \big) \\
	& + \frac 1 {b_t - v'_\epsilon} \big( \frac {v_\epsilon''}{v_\epsilon'} + \frac {v_\epsilon'''}{v_\epsilon''} + (1-\alpha)C(\psi) + 1 \big) \\
	\leq & \frac {H_\epsilon'}{v_\epsilon'} + \frac {H_\epsilon''}{v_\epsilon''} - \frac {v_\epsilon''}{(v_\epsilon')^2} + (1 - \alpha)C(\psi)\frac 1 {v_\epsilon''} - \frac {v_\epsilon''}{(v_\epsilon'-a_t)^2} - \frac {v_\epsilon''}{(b_t - v_\epsilon')^2} \\
	& + \frac 1 {v_\epsilon' - a_t} \big( 1 + (1-\alpha)C(\psi) \big) + \frac 1 {b_t - v_\epsilon'} \big( 1 + (1-\alpha)C(\psi) \big).
\end{align*}

Now fix $T' < T$. If $H_\epsilon$ achieves its maximum at $(x_0,t_0) \in \mathbb R \times (0,T']$, then by the parabolic maximum principle
\[
	H_\epsilon'(x_0,t_0) = 0 \text{, }	H_\epsilon''(x_0,t_0) \leq 0 \text{, and } \frac {\de H_\epsilon}{\de t}(x_0,t_0) \geq 0.
\]
Thus we obtain
\begin{align*}
	0 & \leq v_\epsilon''(x_0,t_0) \frac {\de H_\epsilon}{\de t}(x_0,t_0) \\
	& \leq C(\alpha,\psi) + C(\alpha,\psi) \big( \frac{v_\epsilon''}{v_\epsilon' - a_t} + \frac {v_\epsilon''}{b_t - v_\epsilon'} \big) - \big( \frac {(v_\epsilon'')^2} {(v_\epsilon' - a_t)^2} + \frac {(v_\epsilon'')^2} {(b_t - v_\epsilon')^2} \big) \\
	& \leq C(\alpha,\psi) + C(\alpha,\psi) \frac {v_\epsilon''}{(v'_\epsilon-a_t)(b_t - v_\epsilon')} - C^{-1} \frac {(v_\epsilon'')^2}{(v_\epsilon' - a_t)^2(b_t - v_\epsilon')^2}.
\end{align*}

In particular, since there is a uniform $C = C(\alpha,\psi)>0$ independent of $\epsilon$ such that 
\[
	0 \leq C + C e^{H_\epsilon}(x_0,t_0) - C^{-1} e^{2H_\epsilon}(x_0,t_0)
\]
we get that at the point of maximum
\[
	e^{H_\epsilon}(x_0,t_0) \leq C
\]
and therefore
\[
	H_\epsilon \leq C
\]
on $\mathbb R \times [0,T']$ independent of $\epsilon$. Our estimates are independent of $T'$, so letting $T'$ tend to $T$ we obtain the uniform bound on $\mathbb R \times [0, T)$, which gives the desired estimate.
\end{proof}

Finally, we prove the Gromov-Hausdorff convergence for the conical K\"ahler-Ricci flow with symmetry in the contracting case.

\begin{lem}
  If $\omega(t)$ solves the conical K\"ahler-Ricci flow ~\eqref{eq:1} and satisfies condition ~\eqref{eq} of Theorem \ref{main}, then $\omega(t)$ converges to $\omega_T$ a closed non-negative current which is a smooth K\"ahler metric on $X \setminus D_0$, and if $(\ov X, d)$ is the metric completion of $(X \setminus D_0, \omega_T)$, then $\ov X$ is homeomorphic to $\PP^2/\mathbb Z_k$, and $(X,\omega(t))$ converges to $(\ov X, d)$ in the Gromov-Hausdorff topology.
\end{lem}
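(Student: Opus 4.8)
The plan is to combine the smooth convergence away from $D_0$ that Proposition~\ref{CKRF2} already supplies with explicit distance estimates read off the Calabi ansatz, and then to identify the resulting limit space with $\PP^2/\mathbb Z_k$ through the contraction $\pi\colon X\to\PP^2/\mathbb Z_k$ of \eqref{eq:14}. Proposition~\ref{CKRF2} already gives that $\omega(t)$ converges to a closed non-negative current $\omega_T$ which restricts to a smooth K\"ahler metric on $X\setminus D_0$ with $C^\infty_{\mathrm{loc}}(X\setminus D_0)$ convergence, so the only remaining issue is the metric geometry of the collapsing neck around $D_0$. Throughout I would work with the $U(2)/\mathbb Z_k$-invariant coordinate $\rho$ and the ansatz $\omega(t)=kv'(\rho,t)\,p^*\omega_{\mathrm{fs}}+\im v''(\rho,t)\,\de\rho\wedge\db\rho$ of Section~4, together with Lemmas~\ref{lemma1} and \ref{lemma2}.

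First I would establish a diameter estimate for the sublevel sets $\{\rho\le\rho_0\}$. Letting $\epsilon\to0$ and $t\to T^-$ in Lemmas~\ref{lemma1} and \ref{lemma2}, and using that $a_t\to0$ under condition~\eqref{eq}, the limiting profile satisfies $v'_T(\rho)\le Ce^{\alpha\rho/2}$ and $v''_T(\rho)\le Ce^{\alpha\rho/2}$, so both decay to $0$ as $\rho\to-\infty$: the base $\PP^1$-slice $\{\rho=\rho_0\}$ (of $\omega(t)$-diameter $\le C\sqrt{v'(\rho_0,t)}$) and the circle fibres (of length $\le C\sqrt{v''(\rho_0,t)}$) both shrink. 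Since the $\omega(t)$-length of a radial fibre segment from level $\rho_1$ to level $\rho_0$ equals $\frac12\int_{\rho_1}^{\rho_0}\sqrt{v''(s,t)}\,ds$ and $e^{\alpha s/4}$ is integrable near $-\infty$, one obtains
\[
  \mathrm{diam}_{\omega(t)}\big(\{\rho\le\rho_0\}\big)\le C\big(e^{\alpha\rho_0/4}+\sqrt{a_t}\,\big)
\]
uniformly in $t\in[0,T)$; letting $t\to T^-$ gives $\mathrm{diam}_{\omega_T}(\{\rho\le\rho_0\}\setminus D_0)\le Ce^{\alpha\rho_0/4}\to0$ as $\rho_0\to-\infty$. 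In particular the whole of $D_0\cong\PP^1$ (which lies within distance $Ce^{\alpha\rho_0/4}$ of the slice $\{\rho=\rho_0\}$, since the radial integral converges) collapses to a point in the limit.

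Next I would identify the metric completion. Any sequence in $X\setminus D_0$ leaving every compact set toward $D_0$ has $\rho\to-\infty$, hence is $\omega_T$-Cauchy by the diameter bound, and any two such sequences are Cauchy-equivalent, defining a single new point $*$; conversely $v''_T>0$ on compact $\rho$-intervals and the fact that any path joining $\{\rho\ge\rho_0\}$ to $\{\rho\le\rho_0-1\}$ has its $\rho$-coordinate sweeping $[\rho_0-1,\rho_0]$ gives a positive lower bound for the $\omega_T$-distance from every point of $\{\rho\ge\rho_0\}$ to $*$. Thus $\overline X=(X\setminus D_0)\sqcup\{*\}$, with the sets $\{\rho<\rho_0\}\cup\{*\}$ a neighbourhood basis of $*$. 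Since $\pi$ of \eqref{eq:14} is a biholomorphism from $X\setminus D_0$ onto $\PP^2/\mathbb Z_k$ minus its orbifold point $q$, extending it by $*\mapsto q$ produces a continuous bijection $\overline X\to\PP^2/\mathbb Z_k$ between compact Hausdorff spaces (the neighbourhood bases at $*$ and $q$ correspond under $\pi$), hence a homeomorphism.

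Finally I would prove the Gromov--Hausdorff convergence $(X,\omega(t))\to(\overline X,d)$: given $\delta>0$, pick $\rho_0$ with $Ce^{\alpha\rho_0/4}<\delta$, then pick $t$ close to $T$ so $\sqrt{a_t}<\delta$ and, by the $C^\infty_{\mathrm{loc}}(X\setminus D_0)$ convergence, so that $\omega(t)$ and $\omega_T$ are $\delta$-close in $C^0$ on the fixed compact set $\{\rho\ge\rho_0\}$; then the map $X\to\overline X$ which is nearly the identity on $\{\rho\ge\rho_0\}$ and collapses $\{\rho\le\rho_0\}$ to $*$ is an $\eta(\delta)$-Gromov--Hausdorff approximation with $\eta(\delta)\to0$. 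The hard part will be this last step: one must check that distances between points of the fixed region $\{\rho\ge\rho_0\}$ are essentially unchanged by $\omega(t)$-minimizing paths that excursion into the collapsing neck (such an excursion can be shortcut through the slice $\{\rho=\rho_0\}$ at a cost $\le C(e^{\alpha\rho_0/4}+\sqrt{a_t}\,)$), and that the neck collapses at a rate independent of how close $t$ is to $T^-$; the uniform-in-$t$ decay estimates of Lemmas~\ref{lemma1}--\ref{lemma2} are exactly what make both of these controls uniform.
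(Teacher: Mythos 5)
Your proposal is correct and follows essentially the same strategy as the paper: combine Proposition~\ref{CKRF2} with the upper bounds of Lemmas~\ref{lemma1} and \ref{lemma2} to show that neighborhoods $\{\rho\le\rho_0\}$ of $D_0$ have $\omega(t)$-diameter $O(e^{\alpha\rho_0/4}+\sqrt{a_t})$ uniformly in $t$, identify the metric completion by adding a single point corresponding to the orbifold point of $\PP^2/\mathbb Z_k$, and build $\delta$-Gromov--Hausdorff approximations from the smooth convergence on compact subsets. The only cosmetic difference is that you integrate $\sqrt{v''}$ directly in the $\rho$-coordinate, whereas the paper pulls the bound back through the $k:1$ covering $\Phi:\mathbb C^2\setminus\{0\}\to X\setminus(D_0\cup D_\infty)$ to compare with $r^{-(2-\alpha k)}\omega_{\mathrm{eucl}}$ and reads off the same diameter bound $C\delta^{\alpha k/2}$ there.
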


\begin{proof}
From Lemma \ref{lemma1} and Lemma \ref{lemma2} it follows that
\begin{align*}
	\omega_\epsilon(t) & = v'_\epsilon(\rho) \ddbar \rho + v''_\epsilon(\rho) \im \de \rho \wedge \db \rho \\
	& \leq a_t \ddbar \rho + C e^{\alpha \rho/2} \big( k \ddbar \rho + \im \de \rho \wedge \db \rho \big)
\end{align*}
where $a_t \rightarrow 0$ as $t \rightarrow T^-$ and $C>0$ is independent of $\epsilon$.

We let $\epsilon_i \rightarrow 0^+$ be a sequence such that $\omega_{\epsilon_i}(t)$ converges weakly to $\omega(t)$ the unique solution of the conical K\"ahler-Ricci flow on $X \times[0,T)$. Then $\omega(t)$ satisfies
\[
 \omega(t) \leq a_t \ddbar \rho + C e^{\alpha \rho /2}(k \ddbar \rho + \im \de \rho \wedge \db \rho).
\]
Define
\[
	\widetilde \omega_T = e^{\alpha \rho/2} (k \ddbar \rho + \im \de \rho \wedge \db \rho).
\]
Then by Proposition ~\ref{CKRF2}, as $t \rightarrow T^-$, $\omega(t)$ converges to a closed non-negative current $\omega_T$ which is a smooth K\"ahler metric on $X \setminus D_0$ and satisfies
\[
	\omega_T \leq C \widetilde \omega_T. 
\]

Since we have smooth convergence on compact subsets of $X \setminus D_0$, to determine the metric completion of $(X \setminus D_0,\omega_T)$ it suffices to determine the metric completion of $(W_\delta \setminus D_0,\omega_T)$ where
\[
	W_\delta = \{ |\sigma|_{\eta_0}^2 = \frac {e^\rho}{1 + e^\rho} < \delta \}
\]
is an open neighborhood of $D_0$ for any fixed $0 < \delta < 1$.

From the birational map ~\eqref{eq:14} we obtain a $k:1$ covering
\[
	\Phi: \mathbb C^2 \setminus \{ 0 \} \rightarrow X \setminus (D_\infty \cup D_0).
\]
The map can be described explicitly onto the charts $U$ and $U'$ (cf. Section 2) for
\[
 \Phi_U: \mathbb C^2 \setminus \{ x_2 = 0 \} \to U
\]
by
\[
	\Phi_U(x_1,x_2) = (\frac {x_1}{x_2} , x_2^k) := (z,w),
\]
and similarly,
\[
 \Phi_{U'}: \mathbb C^2 \setminus \{ x_1 = 0 \} \to U'
\]
by
\[
	\Phi_{U'}(x_1,x_2) = (\frac {x_2}{x_1},x_1^k) := (z',w')
\]
which is seen to be well-defined and invariant under the $\mathbb Z_k$-action on $\mathbb C^2 \setminus \{ 0 \}$.

The covering has the property that
\[
	\Phi^*\rho = k \log r^2
\]
where $r = (|x_1|^2 + |x_2|^2)^{1/2}$.

Hence
\begin{align*}
	\Phi^* \widetilde \omega_T
	& = \Phi^* \Big( e^{\alpha \rho/2} ( k \ddbar \rho + \im \de \rho \wedge \db \rho ) \Big) \\
	& = k^2 r^{\alpha k} \big( \ddbar \log r^2 + \im \de \log r^2 \wedge \db \log r^2 \big) \\
	& = \frac {k^2} {r^{2 - \alpha k}} \im \sum_i {dx_i \wedge d \ov x_i}
\end{align*}
where we used that
\[
	\ddbar \log r^2 + \im \de \log r^2 \wedge \db \log r^2 = \im \frac 1 {r^2} \sum_i{ dx_i \wedge d \ov x_i}.
\]
Note that we have $0 < 2 - \alpha k < 2$ by condition ~\eqref{eq}. 

Thus
\[
 \Phi^* \omega_T \leq C \Phi^* \widetilde \omega_T = \frac C {r^{2 - \alpha k}} \omega_{\mathrm{eucl}},
\]
and therefore if $B_\delta(0)\subseteq \mathbb C^2$ is a Euclidean ball of radius $\delta>0$, then
\begin{equation}\label{eq:11}
	\mathrm{diam}(B_\delta(0)\setminus \{0\},\Phi^*\omega_T) < C \delta^{\alpha k/2}.
\end{equation}

Now, $\Phi^{-1}(W_{1/2} \setminus D_0)$ is identified with $B \setminus \{ 0 \}$, where $B \subseteq \mathbb C^2$ is the Euclidean unit ball, and from ~\eqref{eq:11} it follows that the metric completion of $(\Phi^{-1}(W_{1/2} \setminus D_0),\Phi^*\omega_T)$ is homeomorphic to $B$, and therefore the metric completion of $(W_{1/2}\setminus D_0,\omega_T)$ is homeomorphic to $B/\mathbb Z_k$ and $\ov X$ is homeomorphic to $\PP^2/\mathbb Z_k$.

Moreover, the diameter bound ~\eqref{eq:11} implies that
\begin{equation}\label{eq:10}
 \lim_{\delta \to 0} \limsup_{t \to T^-} \mathrm{diam} (W_\delta,d_{\omega(t)}) = 0,
\end{equation}
so that the exceptional curve is indeed contracting to a single point.

Using ~\eqref{eq:10} and that $\omega(t)$ converges to $\omega_T$ smoothly on compact subsets of $X \setminus D_0$ one can show $(X,\omega(t))$ converges to $(\ov X, d)$ in the Gromov-Hausdorff topology, and we have proved part (\textit{i}) of Theorem \ref{main}. \end{proof}

\section{Estimates Along the Flow with Symmetry in the Collapsing Case}

Next, we prove estimates on the evolving potential in the collapsing case. If $\alpha$ satisfies conditions ~\eqref{eq'} of Theorem \ref{main} then $(b_t - a_t) \to 0$ as $t \to T^-$, while $a_t > a_T > 0$ for all $0\leq t < T$. We obtain the following estimates on the potential.

\begin{lem}\label{lem3}
If $\alpha$ satisfies conditions ~\eqref{eq'}  of Theorem \ref{main}, then there is a uniform constant $C>0$, depending on $\psi$ and the initial K\"ahler class but independent of $\epsilon$, such that the following estimates hold:
 \begin{enumerate}[(i)]
  \item $0 < v_\epsilon'(t,\rho) - a_t < (1 + \alpha) (T - t)$
  \item $\lim_{t \to T^-} (v_\epsilon(t,\rho) - a_T \rho) = 0$
  \item $0 \leq v_\epsilon''(t,\rho) \leq C \mathrm{min} \big( \frac{e^{\alpha \rho}}{(1 + e^\rho)^{1+\alpha}},T - t \big)$
  \item $|v_\epsilon'''| \leq C v_\epsilon''$
 \end{enumerate}
\end{lem}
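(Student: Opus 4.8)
The plan is to treat the four estimates in turn, noting that (i) and (ii) are essentially formal, (iii) recycles the machinery of Section 5 together with Shen's volume bound (Proposition \ref{CKRF1}), and (iv) is the only point requiring genuinely new work. For (i): since $\omega_\epsilon(t)$ is a smooth K\"ahler metric for every $t<T$ we have $v'_\epsilon(t,\cdot)>0$ and $v''_\epsilon(t,\cdot)>0$, so $\rho\mapsto v'_\epsilon(t,\rho)$ is strictly increasing; applying Lemma \ref{calabi} to $\omega_\epsilon(t)$, which lies in the class $2\pi(b_t[D_\infty]-a_t[D_0])$, gives $v'_\epsilon(t,\rho)\to a_t$ as $\rho\to-\infty$ and $v'_\epsilon(t,\rho)\to b_t$ as $\rho\to+\infty$, hence $a_t<v'_\epsilon<b_t$; since $b_t-a_t=(b-a)-(1+\alpha)t=(1+\alpha)(T-t)$ this is (i). For (ii): the normalization $v_\epsilon(t,0)=0$ gives $v_\epsilon(t,\rho)-a_T\rho=\int_0^\rho\big(v'_\epsilon(t,s)-a_T\big)\,ds$, and (i) together with $a_t,b_t\to a_T$ as $t\to T^-$ yields $|v'_\epsilon(t,s)-a_T|\le C(T-t)$ uniformly in $s$ and $\epsilon$, whence $|v_\epsilon(t,\rho)-a_T\rho|\le C|\rho|(T-t)\to 0$ for each fixed $\rho$.

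For (iii), the lower bound is positivity of $\omega_\epsilon(t)$. For the bound by $e^{\alpha\rho}/(1+e^\rho)^{1+\alpha}$ I would apply Proposition \ref{CKRF1} with the smooth volume form $\widehat\omega^2$ of \eqref{eq:8}: using $\widehat v'\widehat v''\le b(b-a)e^\rho/(1+e^\rho)^2$ and $|\sigma|_\eta^{2(1-\alpha)}\ge C^{-1}\big(e^\rho/(1+e^\rho)\big)^{1-\alpha}$ one gets $v'_\epsilon v''_\epsilon\le C\,e^{\alpha\rho}/(1+e^\rho)^{1+\alpha}$, and dividing by $v'_\epsilon\ge a_t\ge\min(a,a_T)>0$ (the positivity of $a_T$ here being precisely condition \eqref{eq'}) gives the estimate. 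For the bound by $T-t$, the maximum principle argument for $H_\epsilon=\log\!\big(v''_\epsilon/((v'_\epsilon-a_t)(b_t-v'_\epsilon))\big)$ from the proof of Lemma \ref{lemma2} applies verbatim — the evolution equations \eqref{eq:5}, \eqref{eq:6}, the $\rho\to\pm\infty$ asymptotics of $H_\epsilon$ coming from Lemma \ref{calabi}, and Lemma \ref{eq:9} are all indifferent to whether \eqref{eq} or \eqref{eq'} holds — giving $e^{H_\epsilon}\le C$, hence $v''_\epsilon\le C(v'_\epsilon-a_t)(b_t-v'_\epsilon)\le\tfrac{C}{4}(b_t-a_t)^2=\tfrac{C(1+\alpha)^2}{4}(T-t)^2\le C'(T-t)$. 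Taking the minimum of the two bounds gives (iii).

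For (iv), the bound $e^{H_\epsilon}\le C$ also gives $v''_\epsilon/(v'_\epsilon-a_t)\le C(b_t-v'_\epsilon)\le Cb$ and $v''_\epsilon/(b_t-v'_\epsilon)\le C(v'_\epsilon-a_t)\le Cb$, so from the identity $v'''_\epsilon/v''_\epsilon=H'_\epsilon+v''_\epsilon/(v'_\epsilon-a_t)-v''_\epsilon/(b_t-v'_\epsilon)$ it suffices to prove the uniform gradient estimate $|H'_\epsilon|\le C$. I would establish this by a Bernstein-type maximum principle: set $P=(H'_\epsilon)^2$, compute $\partial_t P$ from \eqref{eq:5}, \eqref{eq:6}, \eqref{eq:7} and Lemma \ref{eq:9}, add an auxiliary term — a large multiple of $(\log v''_\epsilon)^2$, or of $e^{H_\epsilon}$, or a barrier built from $e^{\alpha\rho}/(1+e^\rho)^{1+\alpha}$ — chosen so as to produce a negative term matching the order of the twisting contribution, and conclude that $P$ must be bounded at an interior maximum; since $H'_\epsilon\to 0$ as $\rho\to\pm\infty$, the interior maximum controls $H'_\epsilon$ everywhere. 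I expect (iv) to be the main obstacle: in contrast to the smooth flow treated in \cite{SoWe11b}, the twisting terms $(1-\alpha)(\log\theta)''$ and $(1-\alpha)(\log\theta)'''$ enter the evolution of any third-order quantity only after division by $v''_\epsilon$, which degenerates along $D_0\cup D_\infty$, so a naive Bernstein argument produces a term of size $CP/v''_\epsilon$ with no matching good term; the auxiliary function must therefore be chosen so that, after multiplying through by $v''_\epsilon$ at the maximum point, Lemma \ref{eq:9} (which bounds the twisting terms uniformly in $\epsilon$) together with (i)--(iii) suffices to close the argument.
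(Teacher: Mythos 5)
Your treatment of (i), (ii), and the first bound in (iii) matches the paper's proof. The serious problem is part (iv), which is not actually proved. You correctly reduce it, via the identity $\frac{v_\epsilon'''}{v_\epsilon''}=H_\epsilon'+\frac{v_\epsilon''}{v_\epsilon'-a_t}-\frac{v_\epsilon''}{b_t-v_\epsilon'}$, to a uniform bound on $|H_\epsilon'|$, but the Bernstein argument on $(H_\epsilon')^2$ is only a sketch: you yourself note that the naive computation produces a term $CP/v_\epsilon''$ with no matching good term, and you list three candidate auxiliary functions without verifying that any of them closes the argument. Since (iv) is exactly the point the paper flags as requiring a new maximum principle argument, this cannot be left unresolved. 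The paper's device is to apply the maximum principle directly to $Q=\frac{v_\epsilon'''}{v_\epsilon''}+(1-\alpha)(\log\theta)'-At$ (and to the analogous quantity with $+At$ for the lower bound): building the twisting term into $Q$ means that at an interior maximum the conditions $Q'=0$ and $Q''\le 0$ absorb every occurrence of $v_\epsilon^{(4)}$, $v_\epsilon^{(5)}$, $(\log\theta)''$ and $(\log\theta)'''$, the term $-At$ produces the good term $-Av_\epsilon''$ in $v_\epsilon''\,\partial_t Q$, and what remains is controlled by powers of $v_\epsilon''/v_\epsilon'$ using (i), the first bound of (iii), and $a_T>0$; the boundary values $v_\epsilon'''/v_\epsilon''\to 1$ as $\rho\to\pm\infty$ come from Lemma \ref{calabi}. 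Note also that the paper derives the $C(T-t)$ bound in (iii) \emph{from} (iv), via the Mean Value Theorem and $\int_{\mathbb R}v_\epsilon''\,d\rho=b_t-a_t=(1+\alpha)(T-t)$, so (iv) logically precedes the second half of (iii) there.

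A secondary, repairable error: your claim that the Lemma \ref{lemma2} argument applies verbatim to give $e^{H_\epsilon}\le C$ uniformly is false under condition \eqref{eq'}. The interior-maximum computation is indeed insensitive to which of \eqref{eq}, \eqref{eq'} holds, but the spatial asymptotics give $e^{H_\epsilon}\to\frac{1}{b_t-a_t}$ as $\rho\to-\infty$, and $b_t-a_t\to 0$ in the collapsing case, so $\sup_\rho e^{H_\epsilon(\cdot,t)}\ge\frac{1}{b_t-a_t}\to\infty$ and no uniform constant exists. What survives is $e^{H_\epsilon(\cdot,t)}\le C/(b_t-a_t)$ (run the argument on $[0,s]$ and take $s=t$), which still yields $v_\epsilon''\le C(b_t-a_t)=C(1+\alpha)(T-t)$, and still gives $\frac{v_\epsilon''}{v_\epsilon'-a_t}\le C$ and $\frac{v_\epsilon''}{b_t-v_\epsilon'}\le C$ for your reduction of (iv). So this alternative route to the $T-t$ bound can be salvaged; the missing proof of $|H_\epsilon'|\le C$ cannot, and you should replace it with an argument along the lines of the paper's quantity $Q$.
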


This lemma is similar to Lemma 4.1 and Lemma 4.3 in \cite{SoWe11b}. The proofs of parts (\textit{i}), (\textit{ii}), and (\textit{iii}) use similar methods to those presented in that paper. The proof of part (\textit{iv}), however, is complicated by the extra terms in our parabolic equation, and requires a different maximum principle argument.

\begin{proof}
Part (\textit{i}) follows from convexity of $v_\epsilon$ and the definition of $a_t$ and $b_t$.

Applying the bound in part (\textit{i}),
\begin{align*}
  |v_\epsilon(t,\rho) - a_t \rho| & = |\int_0^\rho { (v_\epsilon' - a_t) d\rho }| \\
  & \leq (1 + \alpha)(T - t) |\rho| \to 0
\end{align*}
as $t \to T$ while $a_t \to a_T$.

To prove the estimate
\[
	v_\epsilon''(\rho) \leq C \frac {e^{\alpha \rho}}{(1 + e^\rho)^{1+\alpha}},
\]
we use Lemma \ref{CKRF1} with $\widehat \omega = \ddbar \widehat v(\rho)$ the smooth K\"ahler metric defined in ~\eqref{eq:8} and ~\eqref{eq:8'}. We obtain
\[
	\omega_\epsilon(t)^2 = v_\epsilon' v_\epsilon'' \ddbar \rho \wedge \im \de \rho \wedge \db \rho \leq C \frac {\widehat v' \widehat v''}{|\sigma|_\eta^{2(1-\alpha)}} \ddbar \rho \wedge \im \de \rho \wedge \db \rho,
\]
and so
\[
	v_\epsilon' v_\epsilon'' \leq C \frac {\widehat v' \widehat v''}{|\sigma|_\eta^{2(1-\alpha)}} \leq C b (b-a) \frac {e^{\alpha \rho}}{(1 + e^\rho)^{1+\alpha}}.
\]
Using that $v_\epsilon'$ is uniformly bounded away from zero along the flow, we obtain the desired estimate.

To prove part (\textit{iv}) we compute the evolution of
\[
  Q = \frac {v_\epsilon'''}{v_\epsilon''} + (1-\alpha)(\log \theta)' - A t
\]
where $A$ is a large constant to be determined.

We claim that for each $t < T$ fixed, $Q$ is uniformly bounded from above and below independent of $\epsilon$ as $\rho$ tends to $\pm \infty$. By Lemma ~\ref{eq:9}, it suffices to bound $|v_\epsilon'''/v_\epsilon''|$ as $\rho \to \pm\infty$.

Since $\omega_\epsilon(t)$ remains smooth for all $t < T$, by Lemma \ref{calabi} there exist smooth functions $v_{0,\epsilon},v_{\infty,\epsilon} : [0,\infty) \times [0,T) \rightarrow \mathbb R$ with $v'_{0,\epsilon}(0,t)>0$ and $v'_{\infty,\epsilon}(0,t)>0$ such that
\begin{align*}
	v_\epsilon(\rho,t) 
	& = v_{0,\epsilon}(e^\rho,t) + a_t \rho \\
	& = v_{\infty,\epsilon}(e^{-\rho},t) + b_t \rho.
\end{align*}
So that as $\rho$ tends to $-\infty$,
\[
 \frac {v_\epsilon'''}{v_\epsilon''} = \frac {v_{0,\epsilon}''' e^{2 \rho} + 3 v_{0,\epsilon}'' e^\rho + v_{0,\epsilon}'} {v_{0,\epsilon}'' e^\rho + v_{0,\epsilon}'} \to 1.
\]
The bound as $\rho$ tends to $+\infty$ is similar.

Now, fix a time $0< T' < T$. If $Q$ achieves a maximum at $(x_0,t_0) \in \mathbb R \times (0,T']$, then at $(x_0,t_0)$,
\[
	0 = Q' = \frac {v_\epsilon^{(4)}}{v_\epsilon''} - \frac {(v_\epsilon''')^2}{(v_\epsilon'')^2} + (1-\alpha)(\log \theta)'',
\]
and
\[
 0 \geq Q'' = \frac {v_\epsilon^{(5)}}{v_\epsilon''} - 3 \frac {v_\epsilon''' v_\epsilon^{(4)}}{(v_\epsilon'')^2} + 2 \frac {(v_\epsilon''')^2}{(v_\epsilon'')^2} + (1-\alpha)(\log \theta)'''.
\]
By the parabolic maximum principle, at $(x_0,t_0)$ we have 
\begin{align*}
	0 \leq v_\epsilon'' \frac {\de Q}{\de t} 
	= & \big( \frac {\de v_\epsilon'''}{\de t} - \frac{v_\epsilon'''}{v_\epsilon''} \frac{\de v_\epsilon''}{\de t} - A v_\epsilon'' \big) \\
	= & \frac {v_\epsilon^{(4)}} {v_\epsilon'} - 3 \frac {v_\epsilon'' v_\epsilon'''}{(v_\epsilon')^2} + 2 \frac {(v_\epsilon'')^3} {(v_\epsilon')^3} + \big( \frac {v_\epsilon^{(5)}} {v_\epsilon''} - 3 \frac {v_\epsilon'''} {v_\epsilon''} \frac {v_\epsilon^{(4)}} {v_\epsilon''} + 2 \frac {(v_\epsilon''')^3} {(v_\epsilon'')^3} + (1-\alpha)(\log \theta)''' \big) \\
	& - \frac {v_\epsilon'''}{v_\epsilon''} \Big( -  \frac {(v_\epsilon'')^2}{(v_\epsilon')^2} + \frac {v_\epsilon'''} {v_\epsilon'} + \big( \frac{v_\epsilon^{(4)}} {v_\epsilon''} - \frac {(v_\epsilon''')^2} {(v_\epsilon'')^2} + (1-\alpha)(\log \theta)'' \big) \Big) - A v_\epsilon''\\
	= & 2 \big( \frac{v_\epsilon''}{v_\epsilon'} \big)^3 - 2 \big( \frac{v_\epsilon''}{v_\epsilon'} \big)^2 \big( Q - (1-\alpha) (\log \theta)' + At \big) \\
	& + \frac {v_\epsilon''} {v_\epsilon'} \big( Q' - (1-\alpha) (\log \theta)'' \big) - \frac {v_\epsilon'''}{v_\epsilon''} Q' + Q''  - A v_\epsilon'' \\
	\leq & 2 \big( \frac{v_\epsilon''}{v_\epsilon'} \big)^3 - 2 \big( \frac{v_\epsilon''}{v_\epsilon'} \big)^2 \big( Q - (1-\alpha) (\log \theta)' + At \big) - (1-\alpha) \frac {v_\epsilon''}{v_\epsilon'} (\log \theta)''  - A v_\epsilon'' \\
	\leq & 2 \big( \frac{v_\epsilon''}{v_\epsilon'} \big)^3 - 2 \big( \frac{v_\epsilon''}{v_\epsilon'} \big)^2 \big( Q - (1-\alpha) (\log \theta)' + At \big) + C v_\epsilon''  - A v_\epsilon''
\end{align*}
where we used Lemma ~\ref{eq:9} and that $v_\epsilon'$ is uniformly bounded away from zero in the last line. Choosing $A = C+1$, and using that $v_\epsilon'' > 0$, we conclude
\[
  Q(x_0,t_0) \leq \big( \frac {v_\epsilon''}{v_\epsilon'} + (1-\alpha) (\log \theta)' - A t \big) (x_0,t_0) \leq C,
\]
and therefore on $\mathbb R \times [0,T']$
\[
 Q = \frac{v_\epsilon'''}{v_\epsilon''} + (\log \theta)' - A t \leq C.
\]
Using Lemma ~\ref{eq:9} again,
\[
  v_\epsilon''' \leq (C + A T) v_\epsilon'',
\]
and since the bound is independent of $T'<T$, letting $T'$ tend to $T$ we obtain the estimate on all of $\mathbb R \times [0,T)$.

The proof of the lower bound can be done similarly by computing the evolution of
\[
 Q = \frac{v_\epsilon'''}{v_\epsilon''} + (1-\alpha)(\log \theta)' + A t
\]
at a point of minimum.

Finally, to prove
\[
	v_\epsilon'' \leq C (T - t),
\]
we use that $v_\epsilon''(t,\rho) \leq C \widehat v''(\rho)$ implies that for each $t\in [0,\infty)$ fixed, $v_\epsilon''(t,\rho) \to 0$ as $\rho \to \pm \infty$. 
Thus there exists $\rho_t \in \mathbb R$ such that
\[
 v_\epsilon''(t,\rho_t) = \sup_{\rho \in \mathbb R} v_\epsilon''(t,\rho).
\]
Next, the Mean Value Theorem and the bound in part (\textit{iv}) imply
\[
 v_\epsilon''(t,\rho_t) - v_\epsilon''(t,\rho) \leq C v_\epsilon''(t,\rho_t) | \rho - \rho_t|
\]
where the constant is independent of $\epsilon$, so that for $|\rho - \rho_t| \leq 1/2C$,
\[
 v_\epsilon''(t,\rho) \geq \frac {v_\epsilon''(t,\rho_t)} 2,
\]
and so
\[
 \frac 1 {2C} v_\epsilon''(t,\rho_t) = \int_{|\rho - \rho_t|\leq 1/2C} { \frac{v_\epsilon''(t,\rho_t)} 2 d\rho} < \int_{-\infty}^{\infty} {v_\epsilon''(t,\rho) d\rho} = b_t - a_t = (1 + \alpha) (T-t)
\]
from which the bound follows.
\end{proof}

From the previous lemma we obtain the following immediate corollaries.
\begin{cor}\label{cor4}
There exists a uniform constant $C>0$, independent of $\epsilon$, such that for all $t \in [0,T)$
  \begin{enumerate}[(i)]
    \item $\omega_\epsilon(t) \leq C (\widehat v'(\rho) \ddbar \rho + |\sigma|_\eta^{-2 (1-\alpha)} \widehat v''(\rho) \im \de \rho \wedge \db \rho)$
    \item $(ka_T) p^*\omega_{\mathrm{fs}} \leq \omega_\epsilon(t)$
    \item $C^{-1} \leq \mathrm{diam}(X,\omega_\epsilon(t)) \leq C$
  \end{enumerate}
\end{cor}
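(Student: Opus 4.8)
Since the corollary is billed as immediate, the plan is to read each bound off Lemma~\ref{lem3}, using the explicit expression $\omega_\epsilon(t) = v_\epsilon'(\rho)\,\ddbar\rho + v_\epsilon''(\rho)\,\im\,\de\rho\wedge\db\rho$ and the reference potential $\widehat v$ of \eqref{eq:8}--\eqref{eq:8'}. For (\textit{i}) I would compare the two coefficients separately. The $\ddbar\rho$ coefficient: Lemma~\ref{lem3}(\textit{i}) gives $v_\epsilon' < a_t + (1+\alpha)(T-t)$, which is uniformly bounded above, while $\widehat v'(\rho) = a + (b-a)\tfrac{e^\rho}{1+e^\rho}\ge a>0$, so $v_\epsilon'\le C\widehat v'$. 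The $\im\,\de\rho\wedge\db\rho$ coefficient: Lemma~\ref{lem3}(\textit{iii}) gives $v_\epsilon''\le C\,\tfrac{e^{\alpha\rho}}{(1+e^\rho)^{1+\alpha}}$, and from $|\sigma|_\eta^2 = \tfrac{e^{-\psi+\rho}}{1+e^\rho}$ and $\widehat v''(\rho) = (b-a)\tfrac{e^\rho}{(1+e^\rho)^2}$ one computes $|\sigma|_\eta^{-2(1-\alpha)}\widehat v'' = (b-a)e^{(1-\alpha)\psi}\tfrac{e^{\alpha\rho}}{(1+e^\rho)^{1+\alpha}}$; since $\psi$ is bounded this yields $v_\epsilon''\le C\,|\sigma|_\eta^{-2(1-\alpha)}\widehat v''$. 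Adding the two estimates gives (\textit{i}).

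Part (\textit{ii}) follows from Lemma~\ref{lem3}(\textit{i}), which gives $v_\epsilon'(t,\rho) > a_t \ge a_T$ throughout the collapsing case (recall $a_t\ge a_T>0$ there): discarding the non-negative fiber term in $\omega_\epsilon(t) = k\,v_\epsilon'\,p^*\omega_{\mathrm{fs}} + v_\epsilon''\,\im\,(\tfrac{dw}{w}+\tfrac{\de h}{h})\wedge(\tfrac{d\ov w}{\ov w}+\tfrac{\db h}{h})$ leaves $\omega_\epsilon(t)\ge (ka_T)\,p^*\omega_{\mathrm{fs}}$.

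For (\textit{iii}), the upper bound follows from (\textit{i}): the form $\widetilde\omega := \widehat v'\,\ddbar\rho + |\sigma|_\eta^{-2(1-\alpha)}\widehat v''\,\im\,\de\rho\wedge\db\rho$ is independent of $\epsilon$ and $t$ and satisfies $\omega_\epsilon(t)\le C\widetilde\omega$, so it is enough to bound $\mathrm{diam}(X\setminus D_0,\widetilde\omega)$. On $\{|\sigma|_{\eta_0}^2\ge\delta\}$ the factor $|\sigma|_\eta^{-2(1-\alpha)}$ is bounded, hence $\widetilde\omega\le C\widehat\omega$ there with $\widehat\omega = \ddbar\widehat v$ a smooth K\"ahler metric on $X$; near $D_0$, working in the chart $U$ where $\rho = \log|w|^2 + O(1)$, one has $\widehat v'\to a$ and $|\sigma|_\eta^{-2(1-\alpha)}\widehat v''\asymp |w|^{2\alpha}$, so a short computation (as in Section~5) shows $\widetilde\omega$ is quasi-isometric to the model cone metric $\tfrac{\im\,dw\wedge d\ov w}{|w|^{2(1-\alpha)}} + p^*\omega_{\mathrm{fs}}$ of angle $2\pi\alpha$ along $D_0$, under which $D_0$ lies at finite distance. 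Combining the two regimes, $(X\setminus D_0,\widetilde\omega)$ has finite diameter, so $\mathrm{diam}(X,\omega_\epsilon(t))\le C$ uniformly in $\epsilon$ and $t$. The lower bound is immediate from (\textit{ii}): the projection $p\colon(X,\omega_\epsilon(t))\to(\PP^1,ka_T\omega_{\mathrm{fs}})$ is distance non-increasing, so $\mathrm{diam}(X,\omega_\epsilon(t))\ge\mathrm{diam}(\PP^1,ka_T\omega_{\mathrm{fs}}) = C^{-1}>0$, a constant depending only on $ka_T>0$. The one point requiring more than bookkeeping is the finite-diameter assertion near $D_0$ in (\textit{iii}) --- verifying that $\widetilde\omega$, assembled from the truncated coefficients $\widehat v'$ and $|\sigma|_\eta^{-2(1-\alpha)}\widehat v''$, is genuinely quasi-isometric to a conical metric of angle $2\pi\alpha$ and not to something more degenerate --- but this reproduces the local analysis already performed in Section~5 and so poses no real obstacle.
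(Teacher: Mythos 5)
Your argument is correct and follows essentially the same route as the paper: part (\textit{i}) from Lemma~\ref{lem3}(\textit{i}),(\textit{iii}) together with the identity $|\sigma|_\eta^{-2(1-\alpha)}\widehat v''=(b-a)e^{(1-\alpha)\psi}\,e^{\alpha\rho}(1+e^\rho)^{-(1+\alpha)}$, part (\textit{ii}) from Lemma~\ref{lem3}(\textit{i}), and part (\textit{iii}) by combining the lower bound from (\textit{ii}) with the fact that the $\epsilon$- and $t$-independent comparison form is a conical K\"ahler metric of angle $2\pi\alpha$ along $D_0$ with finite diameter. The only difference is that you spell out the coefficient-by-coefficient comparison and the local cone-model computation near $D_0$, which the paper leaves implicit.
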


\begin{proof}
 Part (\textit{i}) follows from the calculation that
\[
 \frac 1 {|\sigma|_\eta^{2(1-\alpha)}} \widehat v''(\rho) = \frac {e^{\alpha \rho}}{(1+e^\rho)^{1 + \alpha}}
\]
and the first estimate of Lemma ~\ref{lem3}.\textit{iii}; part (\textit{ii}) follows from Lemma ~\ref{lem3}.\textit{i}; and part (\textit{iii}) follows from part (\textit{ii}) for the lower bound, and part (\textit{i}) for the upper bound using the fact that
  \[
   \omega_{\mathrm{cone}} = \widehat v'(\rho) \ddbar \rho +  \frac {\widehat v''}{|\sigma|_\eta^{2 (1-\alpha)}} \im \de \rho \wedge \db \rho
  \]
defines a conical K\"ahler metric on $X$ with cone angle $2\pi \alpha$ along $D_0$, such that the underlying metric space has finite diameter.
\end{proof}

We claim that the fibers are uniformly shrinking to a point for any solution to the twisted K\"ahler-Ricci flow ~\eqref{eq:2}.
\begin{lem}\label{lem5}
 Let $F_y = p^{-1}(y)$ for some $y \in \PP^1$, then
\[
 \lim_{t \to T^-} \mathrm{diam}(F_y,\omega_\epsilon(t)) = 0.
\]
Moreover, the convergence is uniform in the sense that for any $\delta>0$ there exists a constant $C>0$, independent of $\epsilon>0$, such that 
\begin{equation}\label{eq:13}
 \sup_{y \in \PP^1} \mathrm{diam}(F_y,\omega_\epsilon(t)) < \delta
\end{equation}
for all $0 < T - t < C^{-1}$.
\end{lem}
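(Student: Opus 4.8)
The plan is to estimate the diameter of a fiber $F_y = p^{-1}(y)$ directly from the momentum profile. Along $F_y$ the base coordinate is fixed and only the fiber coordinate $w$ varies, so the induced metric is $\omega_\epsilon(t)|_{F_y} = v_\epsilon''(\rho)\, \im\, \de\rho \wedge \db\rho$ restricted to the fiber, where $\rho = \log|w|_h^2$ ranges over all of $\mathbb R$ as one moves from $D_0$ (at $\rho = -\infty$) to $D_\infty$ (at $\rho = +\infty$). The length of a radial path in the fiber is then governed by $\int_{\mathbb R} \sqrt{v_\epsilon''(\rho)}\, d\rho$, and the diameter of $F_y$ is comparable to this integral (up to the $U(1)$-rotation in the fiber, which contributes a factor already controlled by $v_\epsilon''$ since the fiber is rotationally symmetric and compact). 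So the first step is to write $\mathrm{diam}(F_y, \omega_\epsilon(t)) \leq C \int_{-\infty}^{\infty} \sqrt{v_\epsilon''(t,\rho)}\, d\rho$, with $C$ independent of $\epsilon$ and $y$.

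Next I would feed in the estimate from Lemma~\ref{lem3}.\textit{iii}, namely $v_\epsilon''(t,\rho) \leq C\,\mathrm{min}\big(\frac{e^{\alpha\rho}}{(1+e^\rho)^{1+\alpha}}, T-t\big)$. The integral $\int_{\mathbb R} \sqrt{\frac{e^{\alpha\rho}}{(1+e^\rho)^{1+\alpha}}}\, d\rho$ converges (the integrand decays like $e^{\alpha\rho/2}$ as $\rho \to -\infty$ and like $e^{-\rho/2}$ as $\rho \to +\infty$), giving a finite uniform bound on $\mathrm{diam}(F_y)$ — but to get the diameter to go to \emph{zero} I split the integral into a region where $|\rho|$ is large and a region where $|\rho|$ is bounded. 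On the large-$|\rho|$ tails the first bound $\frac{e^{\alpha\rho}}{(1+e^\rho)^{1+\alpha}}$ makes the tail contribution as small as we like, uniformly in $t$; on the bounded region, say $|\rho| \leq R$, the second bound $v_\epsilon'' \leq C(T-t)$ gives a contribution bounded by $2R\sqrt{C(T-t)} \to 0$ as $t \to T^-$. Combining: for any $\delta>0$, first choose $R$ so the tails contribute less than $\delta/2$, then choose $C^{-1}$ so that $T-t < C^{-1}$ forces the middle contribution below $\delta/2$; this gives \eqref{eq:13} with a constant depending only on $\delta$ (through $R$ and the initial data), not on $\epsilon$ or $y$. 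Passing to the limit $\epsilon_i \to 0$ transfers the bound to $\omega(t)$.

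The main obstacle is making precise the first step — that the intrinsic diameter of the fiber is genuinely controlled by $\int \sqrt{v_\epsilon''}\,d\rho$ and not by something larger near $D_\infty$ or $D_0$. Near $D_\infty$ the coordinate $\rho \to +\infty$ but $v_\epsilon'' \to 0$ fast, and one must check that the fiber closes up smoothly there (it does, by the Calabi boundary conditions in Lemma~\ref{calabi}: $v_{\infty,\epsilon}$ is smooth at $0$, so the metric extends across $D_\infty \cap F_y$), so the "length to infinity in $\rho$" is actually a finite geometric distance captured correctly by the integral. Near $D_0$ the fiber meets the cone divisor, and one should work with the smooth approximants $\omega_\epsilon(t)$ — for which the fiber is a genuine smooth $\PP^1$ — prove the bound uniformly in $\epsilon$, and only then take $\epsilon \to 0$; this sidesteps any subtlety with the conical point. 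Once these geometric identifications are in place, the rest is the elementary splitting-of-the-integral argument outlined above.
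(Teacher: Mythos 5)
Your proposal is correct and follows essentially the same route as the paper: the paper also splits each fiber into neighborhoods of $D_0$ and $D_\infty$ (your large-$|\rho|$ tails), controlled uniformly in $t$ and $\epsilon$ by the fixed conical reference bound coming from the first estimate of Lemma~\ref{lem3}.\textit{iii} (packaged there as Corollary~\ref{cor4}.\textit{i}), plus a compact middle region where $v_\epsilon'' \leq C(T-t)$ gives a diameter contribution of order $(T-t)^{1/2}$. Your explicit $\int\sqrt{v_\epsilon''}\,d\rho$ formulation and the check that the fiber closes up at $D_\infty$ are just a more hands-on phrasing of the same decomposition.
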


\begin{proof}
Let $\delta>0$ be given. From Corollary ~\ref{cor4}.\textit{i} there exist open neighborhoods $W^0$ and $W^\infty$ containing $D_0$ and $D_\infty$, respectively, such that
\[
 \mathrm{diam}(W^0 \cap F_y,\omega_\epsilon(t)) < \frac \delta 4,
\]
and
\[
 \mathrm{diam}(W^\infty \cap F_y,\omega_\epsilon(t)) < \frac \delta 4
\]
for all $t \in[0,T)$ and every $y \in \PP^1$.

Let $K = X \setminus (W^0 \cup W^\infty) \subset \subset X \setminus (D_0 \cup D_\infty)$. From the second estimate of Lemma ~\ref{lem3}.\textit{iii}, there exists a constant $C_K>0$, depending on $K$ but independent of $\epsilon$, such that
\[
  \sup_{y \in \PP^1} \|\omega_\epsilon(t)|_{F_y} \|_{C^0(F_y \cap K)} \leq C^2_K (T -t),
\] 
and so we have 
\[
 \sup_{y \in \PP^1} \mathrm{diam}(K \cap F_y, \omega_\epsilon(t)) \leq C_K (T - t)^{1/2}
\]
for $C_K$ possibly larger but depending only on $K$.

It follows that
\[
 \sup_{y \in \PP^1} \mathrm{diam}(F_y,\omega_\epsilon(t)) < C_K (T-t)^{1/2} + \frac \delta 2,
\]
and so for $t$ sufficiently close to $T$ we have ~\eqref{eq:13} uniformly and letting $\delta \to 0$ we obtain
\[
	\lim_{t \to T^-} \mathrm{diam}(F_y,\omega_\epsilon(t)) = 0
\]
for every $y \in \PP^1$, and we have proved the lemma.
\end{proof}

We are now in a position to compute the Gromov-Hausdorff limit for the twisted K\"ahler-Ricci flows at the singularity time.
\begin{lem}\label{lem6}
 $(X,\omega_\epsilon(t))$ Gromov-Hausdorff converges to $(\PP^1,(ka_T) \omega_{\mathrm{fs}})$ as $t \to T^-$.
\end{lem}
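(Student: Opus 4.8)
The plan is to use the ruling $p\colon X\to\PP^1$ itself as an explicit Gromov--Hausdorff approximation between $(X,\omega_\epsilon(t))$ and $(\PP^1,(ka_T)\omega_{\mathrm{fs}})$, and to show that it distorts distances by an amount $\eta(t)$ with $\eta(t)\to 0$ as $t\to T^-$, uniformly in $\epsilon$. All of the analytic content has already been extracted in Lemma~\ref{lem3}, Lemma~\ref{lem5} and Corollary~\ref{cor4}, so what remains is metric-geometric bookkeeping. For the lower bound on distances, Corollary~\ref{cor4}.\textit{ii} gives $(ka_T)\,p^*\omega_{\mathrm{fs}}\le\omega_\epsilon(t)$, so $p$ is distance non-increasing as a map $(X,\omega_\epsilon(t))\to(\PP^1,(ka_T)\omega_{\mathrm{fs}})$; hence $d_{(ka_T)\omega_{\mathrm{fs}}}(p(x),p(y))\le d_{\omega_\epsilon(t)}(x,y)$ for all $x,y\in X$.

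For the matching upper bound I would connect $x$ to $y$ by a path in three stages through a fixed section, say $D_\infty\cong\PP^1$. Since $\omega_\epsilon(t)$ is Calabi invariant, its restriction to $D_\infty$ is a K\"ahler metric on $\PP^1$ invariant under the full rotation group, hence round, and it lies in the class $kb_t[\omega_{\mathrm{fs}}]$ (from $[\omega_\epsilon(t)]\cdot[D_\infty]=2\pi kb_t$); therefore $p|_{D_\infty}$ is an isometry onto $(\PP^1,(kb_t)\omega_{\mathrm{fs}})$. Setting $x_\infty=F_{p(x)}\cap D_\infty$ and $y_\infty=F_{p(y)}\cap D_\infty$, Lemma~\ref{lem5} lets us join $x$ to $x_\infty$ and $y$ to $y_\infty$ inside their fibers at cost at most $\delta(t):=\sup_{y\in\PP^1}\mathrm{diam}(F_y,\omega_\epsilon(t))$ each, with $\delta(t)\to 0$, and then join $x_\infty$ to $y_\infty$ by a minimizing geodesic inside $D_\infty$, of $\omega_\epsilon(t)$-length $\sqrt{b_t/a_T}\;d_{(ka_T)\omega_{\mathrm{fs}}}(p(x),p(y))$. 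Because $b_t\to b_T=a_T$ (recall $b_t-a_t=(1+\alpha)(T-t)$) while the diameter of $(\PP^1,(ka_T)\omega_{\mathrm{fs}})$ is a fixed constant, concatenation yields $d_{\omega_\epsilon(t)}(x,y)\le d_{(ka_T)\omega_{\mathrm{fs}}}(p(x),p(y))+\eta(t)$ with $\eta(t)\to 0$ uniformly in $x,y$ and $\epsilon$.

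Combining the two bounds, $p$ is a surjective $\eta(t)$-isometry onto $(\PP^1,(ka_T)\omega_{\mathrm{fs}})$, which gives the claimed convergence. The one step that uses more than the inequalities already recorded is the identification of $\omega_\epsilon(t)|_{D_\infty}$ with the exact round metric $(kb_t)\omega_{\mathrm{fs}}$: this is where the $U(2)/\mathbb Z_k$-symmetry must be invoked (any two Calabi-invariant metrics on the fixed $\PP^1$ in a common class agree) and where the limiting constant $ka_T$ gets pinned down, and I regard making this precise as the main point of the argument. A variant that avoids restricting to $D_\infty$ would lift a minimizing $\omega_{\mathrm{fs}}$-geodesic to the $\omega_\epsilon(t)$-orthogonal complement of the fibers, which coincides with the Calabi-horizontal distribution $\ker(\de\rho)$; on it $\omega_\epsilon(t)=k v'_\epsilon(\rho)\,p^*\omega_{\mathrm{fs}}$ with $v'_\epsilon(\rho)\in(a_t,b_t)$ tending uniformly to $a_T$, so the lift has the right length up to a factor $1+o(1)$. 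One then notes that $\rho$ is constant along such a lift, so it stays on a compact level set bounded away from $D_0\cup D_\infty$ and the cone plays no role; either way all the hard estimates are already in hand.
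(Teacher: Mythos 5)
Your proposal is correct and follows essentially the same route as the paper: the lower bound from $(ka_T)p^*\omega_{\mathrm{fs}}\le\omega_\epsilon(t)$, the upper bound by concatenating fiber paths (Lemma~\ref{lem5}) with a geodesic in the section $D_\infty$, where $s^*\omega_\epsilon(t)=kb_t\,\omega_{\mathrm{fs}}$ is squeezed between $ka_T\,\omega_{\mathrm{fs}}$ and $k(a_T+(1+\alpha)(T-t))\omega_{\mathrm{fs}}$. The only cosmetic difference is that you identify the restriction to $D_\infty$ exactly as the round metric via the Calabi symmetry, whereas the paper only records the two-sided comparison it needs.
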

\begin{proof}
 Let
\[
 p: X \to \PP^1
\]
be the map giving $X$ the structure of a ruled surface, as in ~\eqref{eq:15}, and let
\[
 s: \PP^1 \to X
\]
be a holomorphic section of the projective line bundle satisfying $p \circ s = \mathrm{Id}_{\PP^1}$ and giving an isomorphism of $\PP^1$ onto $D_\infty \subseteq X$.

Let $\delta>0$ be given. We claim that $p$ and $s$ are $\delta$-isometries of $(X,\omega_\epsilon(t))$ and $(\PP^1,(ka_T)\omega_{\mathrm{fs}})$ for $t$ sufficiently close to $T$. 

First, by Corollary ~\ref{cor4}.\textit{ii} and Lemma ~\ref{lem3} we have
\[
  (k a_T) \omega_{\textrm{fs}} \leq s^* \omega_\epsilon(t) \leq k (a_T + (1+\alpha)(T - t)) \omega_{\textrm{fs}}.
\]
Hence for any $y_0,y_1 \in \PP^1$,
\begin{equation}\label{eq:16}
 (k a_T)^{1/2} d_{\omega_{\mathrm{fs}}}(y_0,y_1) \leq d_{\omega_\epsilon(t)}(s(y_0),s(y_1)) \leq (k a_T + k (1+\alpha)(T-t))^{1/2} d_{\omega_{\mathrm{fs}}}(y_0,y_1).
\end{equation}
In particular,
\[
 |d_{\omega_\epsilon(t)}(s(y_0),s(y_1)) - (k a_T)^{1/2}d_{\omega_{\mathrm{fs}}}(y_0,y_1)| \leq k^{1/2}(1+\alpha)^{1/2}(T-t)^{1/2} \mathrm{diam}(\PP^1,\omega_{\mathrm{fs}}) < \delta
\]
for $t$ sufficiently close to $T$.

Next, there exists $C>0$ such that
\[
  \mathrm{diam}(F_y,\omega_\epsilon(t)) < \frac \delta 4
\]
for all $0 < T - t < C^{-1}$ and all $y \in \PP^1$. Thus for all $x \in X$ we have
\[
 d_{\omega_\epsilon(t)}(x,(s \circ p)(x)) < \frac \delta 4,
\]
and therefore for all $x_0,x_1 \in X$, using ~\eqref{eq:16},
\begin{align*}
  (k a_T)^{1/2} & d_{\omega_{\textrm{fs}}}( p(x_0),p(x_1))\\
  & \leq d_{\omega_\epsilon(t)}(x_0,x_1) \\
  & \leq d_{\omega_\epsilon(t)}(x_0,(s \circ p)(x_0)) + d_{\omega_\epsilon(t)}((s \circ p)(x_0),(s \circ p)(x_1)) + d_{\omega_\epsilon(t)}((s \circ p))(x_1),x_1) \\
  & \leq (k a_T)^{1/2} d_{\omega_{\mathrm{fs}}}(p(x_0),p(x_1)) + k^{1/2} (1+\alpha)^{1/2} (T - t)^{1/2} \mathrm{diam}(\PP^1,\omega_{\mathrm{fs}}) + \frac \delta 2 \\
  & \leq (k a_T)^{1/2} d_{\omega_{\mathrm{fs}}}(p(x_0),p(x_1)) + \delta
\end{align*}
for $t$ sufficiently close to $T$. Thus
\[
 \big| d_{\omega_\epsilon(t)}(x_0,x_1) - (k a_T)^{1/2} d_{\omega_{\mathrm{fs}}}(p(x_0),p(x_1)) \big| \leq \delta
\]
for all $x_0,x_1 \in X$ and $t$ sufficiently close to $T$, which proves the claim.
\end{proof}

Because the estimates are independent of $\epsilon>0$ we obtain the following estimates for $\omega(t)$ solving the conical K\"ahler-Ricci flow, and complete the proof of Theorem \ref{main}.\textit{ii}.
\begin{cor}
 There exists a uniform constant $C>0$, such that for all $t \in [0,T)$:
  \begin{enumerate}[(i)]
    \item $\omega(t) \leq C (\widehat v'(\rho) \ddbar \rho + |\sigma|_\eta^{-2 (1-\alpha)} \widehat v''(\rho) \im \de \rho \wedge \db \rho)$
    \item $(k a_T) p^*\omega_{\mathrm{fs}} \leq \omega(t)$
    \item $C^{-1} \leq \mathrm{diam}(X,\omega(t)) \leq C$
    \item $\lim_{t \to T^-} \mathrm{diam}(F_y,\omega(t)) = 0$ for all $y \in \PP^1$ and the convergence is uniform.
    \item $(X,\omega(t))$ converges to $(\PP^1,(ka_T)\omega_{\mathrm{fs}})$ in Gromov-Hausdorff topology as $t \to T^-$.
  \end{enumerate}
\end{cor}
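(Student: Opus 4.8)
The plan is to obtain everything by letting $\epsilon \to 0$ in the estimates of this section. Recall from Shen's theorem that along a sequence $\epsilon_i \to 0^+$ one has $\omega_{\epsilon_i}(t) \to \omega(t)$ weakly as currents on $X \times [0,T)$ and in $C^\infty_{\mathrm{loc}}((X\setminus D_0)\times[0,T))$, and that every constant appearing in Corollary~\ref{cor4}, Lemma~\ref{lem5}, and Lemma~\ref{lem6}—including the threshold for how close $t$ must be to $T$ in the last—is independent of $\epsilon$. So the corollary is proved once each item is shown to survive this limit.

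First I would transfer (i) and (ii). Both sides of Corollary~\ref{cor4}.(i) and of Corollary~\ref{cor4}.(ii) are, up to fixed constants, the currents $\omega_{\mathrm{cone}}$ and $(ka_T)p^*\omega_{\mathrm{fs}}$, so the inequalities persist under weak limits; on $X\setminus D_0$ they moreover hold pointwise by $C^\infty_{\mathrm{loc}}$ convergence. This gives (i) and (ii). Item (iii) is then immediate: the upper diameter bound follows from (i) since $\omega_{\mathrm{cone}}$ defines a conical K\"ahler metric of finite diameter, and the lower bound follows from (ii) because $p\colon X\to\PP^1$ is surjective, so $\mathrm{diam}(X,\omega(t))\geq (ka_T)^{1/2}\mathrm{diam}(\PP^1,\omega_{\mathrm{fs}})=:C^{-1}>0$, using $a_T>0$ in the collapsing case.

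For (iv), fix $y\in\PP^1$; the fiber $F_y$ is a $\PP^1$ meeting $D_0$ and $D_\infty$ once each. On any compact $K\subset X\setminus(D_0\cup D_\infty)$ the bound $\sup_y\|\omega_\epsilon(t)|_{F_y}\|_{C^0(F_y\cap K)}\leq C_K^2(T-t)$ from the proof of Lemma~\ref{lem5} passes to $\omega(t)$ by smooth convergence; near the two marked points, (i) gives $\omega(t)|_{F_y}\leq C\,\omega_{\mathrm{cone}}|_{F_y}$, whose length tails $\int_{|\rho|>M}\big(e^{\alpha\rho}/(1+e^\rho)^{1+\alpha}\big)^{1/2}\,d\rho$ are summable and hence tend to $0$ as $M\to\infty$ uniformly in $t$ and $y$. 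Combining, $\mathrm{diam}(F_y,\omega(t))\leq C_K(T-t)^{1/2}+o_M(1)\to 0$ uniformly in $y$, which is (iv). Finally, for (v) I would re-run the proof of Lemma~\ref{lem6} with $\omega(t)$ in place of $\omega_\epsilon(t)$: from (ii) and the transferred Lemma~\ref{lem3}.(i) (valid near $D_\infty$ by $C^\infty_{\mathrm{loc}}$ convergence), $s^*\omega(t)$ is sandwiched between $(ka_T)\omega_{\mathrm{fs}}$ and $(ka_T+o(1))\omega_{\mathrm{fs}}$ as $t\to T^-$, and together with (iv) this shows $p$ and $s$ are $\delta$-isometries of $(X,\omega(t))$ and $(\PP^1,(ka_T)\omega_{\mathrm{fs}})$ once $T-t$ is small, giving the Gromov--Hausdorff convergence and completing Theorem~\ref{main}.(ii).

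I expect the only point needing care to be the treatment of paths entering a neighborhood of the cone divisor $D_0$: one must know their length contributes negligibly, uniformly in $\epsilon$ and in $t$, and this is exactly what the uniform upper bound (i) by the \emph{fixed} conical metric $\omega_{\mathrm{cone}}$—whose rays transverse to $D_0$ have finite, small length, quantified by the summable tail integral above—provides. Granting that, the rest is a routine limiting argument with no new input.
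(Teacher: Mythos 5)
Your proposal is correct and follows essentially the same route as the paper: pass to the limit $\epsilon_i \to 0$ using the $\epsilon$-independence of the constants in Corollary~\ref{cor4}, deduce (iii) from (i) and (ii), and rerun the proofs of Lemma~\ref{lem5} and Lemma~\ref{lem6} for $\omega(t)$ directly. The extra details you supply (the summable length tails of $\omega_{\mathrm{cone}}$ near $D_0$ and $D_\infty$, and the lower diameter bound via surjectivity of $p$) are exactly the points the paper leaves implicit.
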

\begin{proof}
 Let $\epsilon_i \to 0^+$ be a subsequence such that $\omega_{\epsilon_i}(t)$ converges to $\omega(t)$, the solution to the conical K\"ahler-Ricci flow, weakly on $X \times [0,T)$ and in $C^\infty_{\mathrm{loc}}((X\setminus D_0) \times [0,T))$. The estimates (\textit{i}) and (\textit{ii}) of Corollary ~\ref{cor4} are independent of $\epsilon$, so we obtain estimates (\textit{i}) and (\textit{ii}) on the solution to the conical K\"ahler-Ricci flow. Part (\textit{iii}) then follows from estimates (\textit{i}) and (\textit{ii}). Part (\textit{iv}) follows by the same proof as in Lemma ~\ref{lem5}, and part (\textit{v}) follows by the same argument as in Lemma ~\ref{lem6} using the fact that $\omega(t)$ remains smooth across $D_\infty$ for all $t \in [0,T)$.
\end{proof}

\section{Further conjectures}

We now illustrate a conjectural picture to describe the types of finite time non-collapsing singularities we expect for the conical K\"ahler-Ricci flow on surfaces.

Let $X$ be a smooth compact K\"ahler surface, and $D = \sum_i \beta_i D_i$ an $\mathbb R$-divisor with coefficients $0 < \beta_i < 1$, with $D_i$ smooth irreducible divisors, and with $D$ having simple normal crossing support. Let $\omega(t)$ be a solution of the conical K\"ahler-Ricci flow starting with an initial conical K\"ahler metric on $(X,D)$, that is with cone angles $2 \pi (1-\beta_i)$ along $D_i$ for each $i$ (see e.g. \cite{Ru14,Ti96} for a definition of conic metrics with cone angles along a divisor with simple normal crossing support). Suppose $\omega(0)\in[\omega_0]$, and assume $\omega(t)$ reaches a finite time non-collapsing singularity at $T < \infty$.

We conjecture that at the singular time, the conical K\"ahler-Ricci flow must contract some collection of disjoint curves $E_1,...,E_\ell$ which are non-singular and isomorphic to $\PP^1$. For such curves we must have
\[
  \mathrm{Vol}(E_i,\omega(t)) = E_i \cdot [\omega(t)] \to 0.
\]
\textit{Note: the volume of $E_i$ may not be well-defined if $E_i$ is an irreducible component of the cone divisor $D$, in this case the volume can then be defined using cohomology}.

In terms of the intersection ring, the previous condition implies the necessary condition:
\[
  (K_X + D) \cdot E_i < 0,
\]
where $K_X$ is the canonical bundle of $X$. Moreover $[\omega_0] + T(K_X + D)$ is a big and nef class such that
\[
	\big( [\omega_0] + T(K_X + D) \big) \cdot E_i = 0.
\]
By the Hodge Index Theorem \cite{BHPV04} it follows that $E_i^2 \leq -1$ for each $i$ and
\[
 0 > (E_i + E_j)^2 = 2 E_i \cdot E_j + E_i^2 + E_j^2
\]
for any $i$ and $j$. In particular, $E_i \cdot E_j = 0$ for $i \neq j$, and thus the $E_i$ are disjoint.

Now, recall the adjunction formula on surfaces for smooth embedded curves from algebraic geometry:
\[
  2 g(E) - 2 = (K_X + E) \cdot E. 
\]
where $g(E)$ is the geometric genus of $E$.

The curves must fall into one of the three following types:
\begin{enumerate}[(i)]
  \item \emph{$E$ is disjoint from the support of $D$}. In this case $E \cdot D$ = 0, and thus $K_X \cdot E < 0$, and by adjunction, $g(E) = 0$, $E^2 = -1$, and $E \cong \PP^1$. In other words, $E$ is a $(-1)$-curve.
  \item \emph{$E$ is an irreducible component of the support of $D$}. Write $D = D' + \beta E$, where $D'$ is an effective divisor without $E$ as an irreducible component. Note that we have $D' \cdot E \geq 0$, and
  	\[
  		(K_X + D' + \beta E) \cdot E < 0.
  	\]
  By adjunction,
  	\begin{align*}
  		2 g(E) - 2 & = (K_X +E) \cdot E \\
  		& < - D' \cdot E + (1-\beta)E^2 \leq (1-\beta) E^2 < 0.
  	\end{align*}
  Thus $g(E) = 0$, so $E \cong \PP^1$, and therefore
  	\[
  		\frac {-2 + D' \cdot E}{1-\beta} < E^2 \leq -1
  	\]
  which is only possible if $D' \cdot E < 1 + \beta$, or stated another way: if $\alpha = (1-\beta)$ is the cone angle$/2 \pi$ along $E$, then
 	 \[
  		\alpha < \frac {2 - D' \cdot E} {(-E^2)}.
 	 \]
 	 In particular, if $E$ is disjoint from all other irreducible components of $D$, then
 	 \[
 	 		\alpha < \frac 2 {(-E^2)}.
 	 \]
	\item \emph{$E$ intersects the support of $D$, but is not an irreducible component of $D$}. Since $E$ is distinct from the irreducible components of $D$,
	\[
		E \cdot D \geq 0, 
	\]
	and since
	\[
  	(K_X + D) \cdot E < 0
	\]
	the adjunction formula yields
	\[
		2 g(E) - 2 = (K_X + E) \cdot E < - D \cdot E + E^2 \leq -1.
	\]
	It follows that $g(E) = 0$, so $E \cong \PP^1$, and $0 \leq D \cdot E < E^2 + 2 \leq 1$, which is only possible if $E^2 = -1$ and $0 \leq D \cdot E < 1$.
\end{enumerate}

By Theorem \ref{main}, we have provided an explicit example of a contraction of type (\textit{ii}) with $D = (1 - \alpha)D_0$.

Moreover, this suggests that given a curve $E$ having negative self-intersection, then for sufficiently small cone angle along $E$, the conical K\"ahler-Ricci flow may contract $E$ at the singular time for some choices of initial conical K\"ahler metrics.

We claim that contractions of \emph{singular} embedded curves do not happen in non-collapsing singularities.
\begin{prop}
If $\omega(t)$ is a maximal solution of the conical K\"ahler-Ricci flow on a K\"ahler surface $X$ such that
\[
 \mathrm{Vol}(X,\omega(t)) > \lambda > 0
\]
for all $t \in [0,T)$, and $E$ is a compact irreducible codimension one subvariety such that
\[
	\mathrm{Vol}(E,\omega(t)) \to 0
\]
as $t \to T^-$, then $E$ is non-singular and isomorphic to $\PP^1$.
\end{prop}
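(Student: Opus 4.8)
The plan is to combine the cohomological constraints coming from the flow with the adjunction inequality for possibly singular curves on a surface. First I would set up the cohomological picture exactly as in the preceding discussion: since the volume of $X$ stays bounded below, $T$ is the first time that $[\omega_0] + t(K_X+D)$ ceases to be K\"ahler, and $L_T := [\omega_0] + T(K_X+D)$ is big and nef; since $\mathrm{Vol}(E,\omega(t)) = E \cdot [\omega(t)] = E \cdot \big([\omega_0] + t(K_X+D)\big) \to 0$ and each $[\omega(t)]$ is K\"ahler, we get $E \cdot (K_X+D) < 0$ and $L_T \cdot E = 0$. (If $E$ is a component of $D$ one uses the cohomological definition of the volume as in the note in the text.) In particular, writing $D = D' + \beta E$ if $E \subseteq \mathrm{supp}(D)$ and $D' = D$, $E \not\subseteq \mathrm{supp}(D)$ otherwise, one has $D' \cdot E \geq 0$ and hence $(K_X + E) \cdot E \leq (K_X + D' + \beta E)\cdot E < (1-\beta)E^2 \leq 0$ in the first case and $(K_X+E)\cdot E \leq (K_X+D)\cdot E < 0$ in the second; in either case
\[
 (K_X + E)\cdot E \leq -1.
\]

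The heart of the argument is the adjunction formula for a \emph{singular} irreducible reduced curve $E$ on a smooth surface. Recall that for such a curve
\[
 (K_X + E)\cdot E = 2 p_a(E) - 2 = 2 g(\widetilde E) - 2 + 2 \delta(E),
\]
where $p_a(E)$ is the arithmetic genus, $\widetilde E \to E$ is the normalization, $g(\widetilde E)$ its genus, and $\delta(E) = \sum_{q \in E}\dim_{\mathbb C}(\nu_* \mathcal O_{\widetilde E}/\mathcal O_E)_q \geq 0$ is the total delta-invariant, which is \emph{strictly positive} precisely when $E$ is singular. Combining this with the bound $(K_X+E)\cdot E \leq -1$ above forces $2 g(\widetilde E) - 2 + 2\delta(E) \leq -1$, hence $g(\widetilde E) + \delta(E) \leq 0$ (both being nonnegative integers, and the quantity on the left being even, this is really $\leq -1$ giving $g(\widetilde E)=\delta(E)=0$, or directly: $2(g(\widetilde E)+\delta(E)) \leq 1$ forces $g(\widetilde E) = \delta(E) = 0$). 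Thus $\delta(E) = 0$, so $E$ is smooth, and $g(E) = 0$, so $E \cong \PP^1$. I would then note that the Hodge Index Theorem together with $L_T$ big and nef and $L_T \cdot E = 0$, $L_T^2 > 0$ gives $E^2 < 0$, consistent with but not needed for the conclusion.

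The main obstacle I anticipate is justifying the singular adjunction formula and the positivity $\delta(E) > 0$ for singular $E$ in a self-contained way, and being careful that $E$ is reduced and irreducible so that $p_a(E)$, the normalization, and $\delta(E)$ are the right objects; this is standard algebraic surface theory (see \cite{BHPV04}), so I would simply cite it. A secondary point requiring care is the case $E \subseteq \mathrm{supp}(D)$: there $\mathrm{Vol}(E,\omega(t))$ is not literally the area of the conical metric restricted to $E$, so I must define it cohomologically as $E\cdot[\omega(t)]$ and check this still tends to zero because $[\omega(t)] = [\omega_0] + t(K_X+D)$ evolves linearly in cohomology along the conical K\"ahler-Ricci flow; once that is in place the inequality $D'\cdot E \geq 0$ (with $D' = D - \beta E$ effective and not containing $E$) is immediate and the rest of the argument goes through verbatim.
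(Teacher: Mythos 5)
Your argument is correct and is essentially the paper's proof: both reduce to the adjunction formula $2p_a(E)-2=(K_X+E)\cdot E$ for the possibly singular irreducible curve $E$, run the same case analysis according to whether $E$ is a component of $D$, and conclude $p_a(E)=0$, hence $E\cong\PP^1$ and nonsingular; your decomposition $p_a(E)=g(\widetilde E)+\delta(E)$ just makes explicit the standard fact (cited in the paper via \cite{GH78}) that a reduced irreducible curve of arithmetic genus zero is smooth and rational. One correction to the logical order: the Hodge Index Theorem step \emph{is} needed, not merely ``consistent with'' the conclusion --- your comparison $(K_X+E)\cdot E\leq (K_X+D'+\beta E)\cdot E$ is equivalent to $(1-\beta)E^2\leq D'\cdot E$, and the analogous inequality $E^2\leq D\cdot E$ in the case $E\not\subseteq\mathrm{supp}(D)$, both of which you can only guarantee from $D'\cdot E\geq 0$ once you already know $E^2\leq 0$; so the deduction $E^2<0$ from $L_T$ nef and big, $L_T^2>0$, $L_T\cdot E=0$ must precede the case analysis, exactly as in the paper.
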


\begin{proof}
Indeed if $E$ is presumed to be singular, then the adjunction formula on surfaces takes the form
\[
	2 p_a(E) - 2 = (K_X + E) \cdot E
\]
where $p_a(E) = \mathrm{dim}_{\mathbb C} H^1(E,\mathcal O_E)$ is the arithmetic genus of $E$.

Now, by assumption $E$ satisfies
\[
 \big( [\omega_0] + T(K_X + D) \big) \cdot E = 0,
\]
so $E^2 \leq -1$ by the Hodge Index Theorem. Moreover, $E$ satisfies
\[
 (K_X + D) \cdot E < 0,
\]
and $E$ falls into one of the three cases outlined above. In each case we conclude that $p_a(E) = 0$, from which it follows that $E \cong \PP^1$ and is therefore non-singular \cite{GH78}.
\end{proof}

We wish to outline this conjectural picture in more detail. Let $X$ be a smooth compact K\"ahler surface, and $D = \sum_i \beta_i D_i$ a cone divisor with simple normal crossing support and $0< \beta_i<1$. We may abuse notion by writing $D$ for both the divisor itself and for its support, where context is clear. 

Now, we begin the conical K\"ahler-Ricci flow with an initial conic metric on $(X,D)$. Suppose that the conical K\"ahler-Ricci flow on $(X,D)$ reaches a finite time non-collapsing singularity contracting some collection of curves $E_1,...,E_\ell$. Write $E = \bigcup_i E_i$. Then there is a non-negative current $\omega_T$ such that as $t \to T^-$, $\omega(t)$  converges to $\omega_T$ globally in the sense of currents and smoothly on compact subsets of $X \setminus (D \cup E)$. Let $(\ov X, d)$ be the metric completion of $(X \setminus (D \cup E),\omega_T)$. We conjecture that $(X,\omega(t))$ converges to $(\ov X, d)$ in the Gromov-Hausdorff topology. Furthermore, we conjecture that $\ov X$ is homeomorphic to a projective variety $Y$, possibly with mild singularities, and that there exists a birational morphism
\[
  f: X \rightarrow Y
\]
such that $f$ contracts each $E_i$ to a point, say $f(E_i) = y_i \in Y$, and $f$ is a biholomorphism from $X \setminus E$ to $Y \setminus \{ y_1,...,y_\ell \}$. Define $D' = f_* D$ as an $\mathbb R$-divisor on $Y$.

This morphism should correspond precisely to the extremal contraction prescribed by the \emph{log minimal model program with scaling of} $[\omega_0]$ (see \cite{BiCaHaMc06}). In dimension two, these are always divisorial contractions, but in higher dimensions $f$ may only be a birational transformation such as a flip. If the map comes from the log MMP with scaling, then assuming $X$ is smooth and $D$ has simple normal crossing support, $(Y,D')$ will have at worst log terminal singularities. In dimension two, this means $Y$ can only have isolated orbifold singularities.

However, $D'$ may no longer have simple normal crossing support, and in fact, its irreducible components may no longer even be smooth in general. If $D'$ \emph{does} have simple normal crossing support, we expect that $f_* \omega_T$ defines a conical K\"ahler metric on $Y \setminus \{ y_1,...,y_\ell \}$  with cone divisor $D'$ in the usual sense. That is, $f_* \omega_T$ is a positive current which is a smooth K\"ahler metric on $Y \setminus ( D' \cup \{ y_1,...,y_\ell \})$, and is quasi-isometric to the standard cone metric along $D'$ away from the points $\{ y_1,...,y_\ell \}$.

If $D'$ has simple normal crossing support and $f_* \omega_T$ defines a conical metric on $(Y\setminus \{ y_1,...,y_\ell \},D')$ in the sense described above, one can hope to continue the conical K\"ahler-Ricci flow on $(Y,D')$ in a weak sense, such as in \cite{LiZh16}.

If $D'$ does not have simple normal crossing support, we still expect $f_* \omega_T$ to be asymptotic to the standard cone metric outside of finitely many points in $D'$ which correspond to the singular points on the irreducible components of $D'$ and the intersection points which are not simple normal crossing. In this case one would hope to continue the conical K\"ahler-Ricci flow in a weak sense on $(Y,D')$. The existence of a solution to such a parabolic flow has not yet been established in sufficient generality.

\section{Further Examples}

We now present a specific example where we expect to see a finite time non-collapsing singularity of type (\textit{iii}) such that the Gromov-Hausdorff limit is homeomorphic to a smooth projective variety with metric singularities along a divisor without simple normal crossing support, illustrating the phenomena conjectured above.

Specifically, let $X$ be a Hirzebruch surface of degree $k$ with
\[
	p: X \to \PP^1
\]
the ruled surface map, and
\[
	\pi: X \to \PP^2 / \mathbb Z_k
\]
the blow-up map centered at the orbifold point $y_0 \in \PP^2 / \mathbb Z_k$.

Let $w_1,...,w_\ell \in \PP^1$ be a finite number of distinct points and $F_i = p^{-1}(w_i)$ be the fibers over each point. Define
\[
	D = \sum_{i=1}^\ell \beta_i F_i
\]
to be the cone divisor with $0<\beta_i<1$, so that $2 \pi (1 - \beta_i)$ is the cone angle along $F_i$, and let 
\[
	\beta = \sum_{i=1}^\ell \beta_i.
\]

\begin{conj}
Let $\omega_0$ be a conical K\"ahler metric on $(X,D)$ defined in this way in the class $2 \pi (b[D_\infty] - a [D_0])$ with $0 < a < b$. 
\begin{enumerate}[(i)]
	\item If $k=1$, and
	\[
		\frac {2 a}{b-a} < (1-\beta)
	\]
	then the conical K\"ahler-Ricci flow starting with $\omega_0$ reaches a finite time non-collapsing singularity at time $T = \frac {a}{1 - \beta}$ which contracts the zero section, $D_0$, at the singularity time (see Figure \ref{fig}).
	
	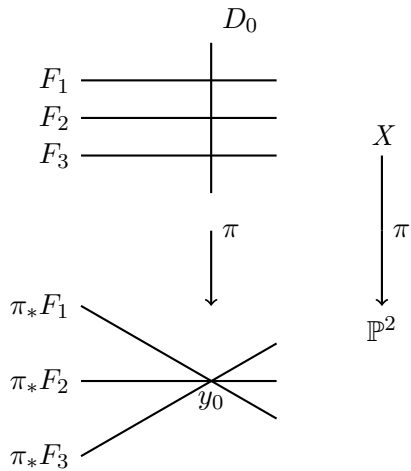
\begin{figure}
\centering
\begin{tikzpicture}
  \draw[thick] (0,0) node[anchor=east] {$\pi_* F_3$} -- (2.60,1.5);
  \draw[thick] (0,1) node[anchor=east] {$\pi_* F_2$} -- (1.73,1.0) node[anchor=north] {$y_0$};
  \draw[thick] (1.73,1) -- (2.60,1.0) ;
  \draw[thick] (0,2) node[anchor=east] {$\pi_* F_1$} -- (2.60,0.5);
  \draw[thick,->] (1.73,3) node[anchor=west] {$\pi$} -- (1.73,2);
  \draw[thick] (1.73,3.5) -- (1.73,5.5) node[anchor=south west] {$D_0$};
  \draw[thick]  (2.60,4) -- (0,4) node[anchor=east] {$F_3$};
  \draw[thick]  (2.60,4.5) -- (0,4.5) node[anchor=east] {$F_2$};
  \draw[thick]  (2.60,5) -- (0,5) node[anchor=east] {$F_1$};
  \draw[thick] (4,4) node[anchor=south] {X} -- (4,3) node[anchor=west] {$\pi$};
  \draw[thick,->] (4,3) -- (4,2) node[anchor=north] {$\mathbb{P}^2$};
\end{tikzpicture}
\caption{The arrangement of hypersurfaces and the expected contraction of Conjecture 9.1.\textit{i} with $k = 1$ and $\ell =3$.} \label{fig}
\end{figure}

	Furthermore, as $t \to T^-$, $\omega(t) \to \omega_T$ in the sense of currents, with $\omega_T$ a smooth K\"ahler metric on $X \setminus (D \cup D_0)$. If $(\ov X, d_T)$ is the metric completion of $(X \setminus (D \cup D_0), \omega_T)$, then $\ov X$ is homeomorphic to $\PP^2$,
	\[
	  \pi: X \to \PP^2
	\]
	is the divisorial contraction outlined above, and $(X,\omega(t))$ Gromov-Hausdorff converges to $(\ov X,d_T)$ as $t \to T^-$.

	Define
	\[
		D' = \pi_* D = \sum_i^\ell \beta_i \pi_*F_i.
	\]
	Then $D'$ defines a divisor on $\PP^2$ whose support consists of a finite number of hyperplanes passing through the blow-up center and in particular, if $\ell \geq 3$, then $D'$ cannot have simple normal crossing support.

	Moreover, $\pi_* \omega_T$ defines a conical K\"ahler metric on $(\PP^2\setminus \{ y_0 \}$, $D' \setminus \{y_0 \})$ in the sense that $\pi_* \omega_T$ is a smooth K\"ahler metric on $Y \setminus D'$, and for all $x \in \pi(F_i)$, $x \neq y_0$, $\pi_* \omega_T$ is quasi-isometric to
	\[
		\im \big( \frac {dz^1 \wedge d \ov z^1}{|z^1|^{2\beta_i}} + dz^2 \wedge d \ov z^2 \big)
	\]
	in a coordinate patch $U \subset \subset \PP^2 \setminus \{y\}$ centered at $x$ with coordinates $(z^1,z^2)$ such that $\pi(F_i) \cap U = \{ z^1 = 0 \}$.

\item If $k=1$, $\beta \in (0,1)$, and $\frac {2a}{b-a} = (1 - \beta)$, then the initial K\"ahler class is a positive multiple of $[K_X^{-1}] - [D]$, and $(X,\omega(t))$ Gromov-Hausdorff converges to a single point at the singularity time, as proved by Liu-Zhang \cite{LiZh}.

\item In all other cases, in particular whenever $k \geq 2$, we have 
	\[
		\mathrm{Vol}(X,\omega(t)) \to 0
	\]
	and $(X,\omega(t))$ Gromov-Hausdorff converges to a conical K\"ahler metric on $\PP^1$ with cone angle $2 \pi (1-\beta_i)$ at $w_i$ for each $i$.

\end{enumerate}
\end{conj}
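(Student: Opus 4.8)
The plan is to run the program behind Theorem~\ref{main}, but the essential new feature is that the cone divisor $D=\sum_i\beta_iF_i$ is a union of \emph{fibers} of the ruling $p:X\to\PP^1$, so the $U(2)/\mathbb Z_k$ symmetry is destroyed: once $\ell$ is large the subgroup of $U(2)/\mathbb Z_k$ fixing $\{w_1,\dots,w_\ell\}$ is finite, and the momentum construction is unavailable. I would begin with the cohomology, which is unaffected. All fibers are numerically equivalent with $[F_i]=\tfrac1k([D_\infty]-[D_0])$, so $[D]=\tfrac\beta k([D_\infty]-[D_0])$, and using~\eqref{eq:12},
\[
  [\omega(t)]=[\omega_0]-2\pi t\big(c_1(X)-[D]\big)=2\pi\big(b_t[D_\infty]-a_t[D_0]\big),\qquad b_t=b-(1+\tfrac{2-\beta}{k})t,\qquad a_t=a+(1-\tfrac{2-\beta}{k})t.
\]
Since $b_t-a_t=(b-a)-2t$ for every $k$: when $k=1$ one has $a_t\to0$ first exactly when $\tfrac{2a}{b-a}<1-\beta$, at $T=\tfrac a{1-\beta}$ (and then $b_T>0$); when $k=1$ and $\tfrac{2a}{b-a}>1-\beta$, or when $k\ge2$ (where $a_t$ is nondecreasing), one has $b_t-a_t\to0$ first, at $T=\tfrac{b-a}{2}$, with $a_T>0$; and the boundary $\tfrac{2a}{b-a}=1-\beta$ is precisely where $[\omega(T)]=0$ and $[\omega_0]$ is a positive multiple of $[K_X^{-1}]-[D]=c_1(X)-[D]$. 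Part~(\textit{ii}) is then immediate from the log Fano theorem of Liu--Zhang~\cite{LiZh}.

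For part~(\textit{i}) I would follow the strategy for divisorial contractions along the K\"ahler--Ricci flow (\cite{SoWe11,SoWe11a,SoWe11b}) and its conical adaptation (\cite{Shen,CoTo15,LiZh}), working with Shen's smooth twisted approximations $\omega_\epsilon(t)$ solving the analog of~\eqref{eq:2} for $D=\sum_i\beta_iF_i$, which exist on $[0,\tfrac a{1-\beta})$. The limit class $[\omega_T]=[\omega_0]-2\pi T(c_1(X)-[D])$ satisfies $[\omega_T]\cdot D_0=2\pi a_T=0$, is nef and big, and for $k=1$ is the pullback under $\pi$ of an ample class on $\PP^2/\mathbb Z_1=\PP^2$, with stable base locus $D_0$. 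I would then combine: (a) Proposition~\ref{CKRF1} plus a uniform lower bound on $\omega_\epsilon(t)$ on compact subsets of $X\setminus(D_0\cup D)$, giving $C^\infty_{\mathrm{loc}}(X\setminus(D_0\cup D))$ convergence to a current $\omega_T$ which refines to a conical K\"ahler current with cone angle $2\pi(1-\beta_i)$ along $F_i\setminus D_0$; (b) a diameter estimate $\lim_{\delta\to0}\limsup_{t\to T^-}\mathrm{diam}(W_\delta,\omega(t))=0$ for shrinking neighborhoods $W_\delta$ of the $(-1)$-curve $D_0$, which away from the finitely many points $D_0\cap F_i$ is the local model of~\cite{SoWe11b} and near those points is a transverse-cone model amenable to the residual fiberwise $S^1$-action (under which $D_0$, $D_\infty$ and each $F_i$ are invariant); (c) the conclusion, as in Section~5 and~\cite{SoWe11}, that the metric completion of $(X\setminus(D\cup D_0),\omega_T)$ is obtained by collapsing $D_0$, hence homeomorphic to $\PP^2$ via $\pi$, and that $(X,\omega(t))\to(\ov X,d_T)$ in Gromov--Hausdorff topology. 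Near $x\in\pi(F_i)\setminus\{y_0\}$ the map $\pi$ is biholomorphic, so (a) yields the quasi-isometry of $\pi_*\omega_T$ to the model cone along $\pi(F_i)=\{z^1=0\}$, and the failure of $D'=\sum_i\beta_i\pi_*F_i$ to be simple normal crossing for $\ell\ge3$ is just the statement that $\ell$ distinct lines through $y_0$ are not in general position there; this $\pi$ is the divisorial contraction prescribed by the log MMP with scaling of $[\omega_0]$.

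For part~(\textit{iii}) the cohomology gives $b_t-a_t\to0$, so $p:X\to\PP^1$ collapses its fibers; since $D$ is itself a union of fibers the analysis parallels Section~6. I would prove the analogs of Lemma~\ref{lem3}, Corollary~\ref{cor4} and Lemmas~\ref{lem5}--\ref{lem6}, using the fiberwise $S^1$-symmetry to reduce the fiber direction to the $\rho$-variable while controlling the $\PP^1$-dependence by the uniform estimates, obtaining $\mathrm{Vol}(X,\omega(t))\to0$, $\mathrm{diam}(F_y,\omega(t))\to0$ uniformly in $y$, a uniform lower bound $c\,p^*\omega_{\mathrm{fs}}\le\omega(t)$ away from the cone fibers, and hence Gromov--Hausdorff convergence to $(\PP^1,g_\infty)$ with $g_\infty=p_*\omega_T$; the transverse cone structure of $\omega(t)$ along $F_i$ pushes down to a cone point of angle $2\pi(1-\beta_i)$ at $w_i$.

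The step I expect to be the main obstacle is exactly the loss of the $U(2)/\mathbb Z_k$ symmetry: in Theorem~\ref{main} every estimate reduces to a maximum principle for a scalar parabolic equation in one space variable, whereas here --- even after using the residual fiberwise $S^1$ --- one confronts a genuine parabolic PDE, and the decisive inputs, namely the second order/metric estimate near $D_0$ in the presence of the transverse cone singularities at $D_0\cap F_i$ (for part~(\textit{i})) and the corresponding estimate near the collapsing cone fibers (for part~(\textit{iii})), together with the diameter control feeding the Gromov--Hausdorff convergence, would require conical refinements of the Song--Tian and Song--Weinkove estimates that are not presently in the literature. This is why the statement is only a conjecture.
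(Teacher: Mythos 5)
The statement you were asked to prove is not proved in the paper: it appears in Section 8 as a conjecture, motivated only by the intersection-theoretic discussion of Section 7 and the analogy with Theorem \ref{main}. So there is no argument of the author's to compare yours against, and your proposal should be judged on its own terms. The parts of it that can be checked are correct and consistent with the paper's framework: $[F_i]=\frac 1k([D_\infty]-[D_0])$, hence $b_t=b-(1+\frac{2-\beta}{k})t$, $a_t=a+(1-\frac{2-\beta}{k})t$ and $b_t-a_t=(b-a)-2t$; for $k=1$ the inequality $\frac{2a}{b-a}<1-\beta$ is exactly the condition that $a_t$ vanishes first, at $T=\frac{a}{1-\beta}$ with $b_T>0$; for $k\geq 2$ the coefficient $1-\frac{2-\beta}{k}$ is positive, so $a_t$ increases and only collapsing can occur; and the boundary case is precisely where $[\omega_0]$ is proportional to $[K_X^{-1}]-[D]$, which is the log Fano situation the paper attributes to Liu--Zhang. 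This correctly reproduces the conjectured trichotomy and the singular times.

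You also put your finger on exactly the obstruction that is the reason the statement is left as a conjecture: because the cone divisor is a union of fibers, the $U(2)/\mathbb Z_k$ symmetry is broken and the momentum construction no longer reduces the flow to a scalar parabolic equation in $\rho$, so the analogues of Lemmas \ref{lemma1}, \ref{lemma2} and \ref{lem3} --- in particular the second-order and diameter estimates near $D_0$ at the transverse intersection points $D_0\cap F_i$, and the corresponding estimates near the collapsing cone fibers --- are not available. (One small correction: the subgroup of $U(2)/\mathbb Z_k$ preserving all the $F_i$ is not finite for $\ell\geq 3$; it still contains the one-dimensional fiberwise circle coming from the scalars, which is the residual $S^1$ you invoke later.) In short, steps (a)--(c) of your part (\textit{i}) and the whole of your part (\textit{iii}) are genuinely open analytic problems, your proposal is an outline rather than a proof, and you are right to say so; this matches the status of the statement in the paper.
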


\bibliographystyle{plain}
\bibliography{Bibliography}

\end{document}